\newtheorem{thm}{Theorem}[section]
\newtheorem{cor}[thm]{Corollary}
\newtheorem{lem}[thm]{Lemma}
\theoremstyle{definition}
\newtheorem{defn}[thm]{Definition}
\newtheorem{prop}[thm]{Proposition}
\theoremstyle{remark}
\newtheorem{rmk}{Remark}
\def\adots{\mathinner{\mkern1mu\raise1pt\vbox{\kern7pt\hbox{.}}\mkern2mu
   \raise4pt\hbox{.}\mkern2mu\raise7pt\hbox{.}\mkern1mu}}
\newcommand{\mbf}[1]{\boldsymbol{\mathbf{#1}}}
\def\bA{{\mathbf A}}
\def\bB{{\mathbf B}}
\def\bC{{\mathbf C}}
\def\bD{{\mathbf D}}
\def\bI{{\mathbf I}}
\def\bL{{\mathbf L}}
\def\bQ{{\mathbf Q}}
\def\bP{{\mathbf P}}
\def\bU{{\mathbf U}}
\def\bV{{\mathbf V}}
\def\bW{{\mathbf W}}
\def\by{{\mathbf y}}
\def\C{{\mathbb C}}
\def\Z{{\mathbb Z}}
\def\Blt{{\mathscr{B}(\ell^2)}}
\def\Image{{\operatorname{Im}\,}}
\title{Localization of Matrix Factorizations}
\author{Ilya Krishtal\thanks{Department of Mathematics
Northern Illinois University, DeKalb, IL }, Thomas Strohmer\thanks{Department of Mathematics, University of California
    at Davis, Davis CA}, and Tim Wertz\thanks{Department of Mathematics, University of California
    at Davis, Davis CA}}
\begin{document}
\maketitle

\begin{abstract}
Matrices with off-diagonal decay appear in a variety of fields in
mathematics and in numerous applications, such as signal processing, statistics, communications engineering,  condensed matter physics, and quantum chemistry.
Numerical algorithms dealing with such matrices often take advantage (implicitly or explicitly) of the 
empirical observation that this off-diagonal decay property seems to be preserved when computing various useful
matrix factorizations, such as the Cholesky factorization or the QR-factorization.  
There is a fairly extensive theory
describing when the {\em inverse} of a matrix inherits the localization properties of the original matrix. 
Yet, except for the special case of band matrices, surprisingly very little theory exists that would establish 
similar results for  matrix factorizations.
We will derive a comprehensive framework to rigorously answer the question
when and under which conditions the matrix factors inherit the localization
of the original matrix for  such fundamental matrix factorizations  as the LU-, QR-, Cholesky,  and Polar factorization.  
\end{abstract}

\section{Introduction}
\label{s:intro}

Matrices with off-diagonal decay appear in numerous areas of mathematics including PDEs, numerical analysis,
pseudo-differential operator theory, and applied harmonic analysis. They also play a prominent role in applications, 
including signal processing, statistics, condensed matter physics, quantum chemistry, and communications
engineering. For instance, such matrices arise naturally from the fact that, in many
systems, perturbations are localized. That is, local disturbances are not felt globally. One way to study such
matrices is via various non-commutative generalizations of Wiener's Lemma, of which \cite{Gro10} provides an
overview. However in a number of applications one is not interested in just the decay properties of the 
matrix describing the system and its inverse, but also in the decay properties of various fundamental factorizations 
of that matrix. We give a few such examples below.

In quantum chemistry, the so-called \emph{density matrix} is a vital tool to determine the electronic structure of (possibly large) molecules~\cite{Mar2004,LeBris2005}.   Many of the important values in electronic structure theory can be obtained as functionals of the density matrix. The typical basis vectors used to discretize the Hamiltonian give rise to a matrix which, while not sparse, exhibits off-diagonal decay.
For non-metallic systems the off-diagonal decay of the density matrix is exponential, while for metallic systems
the decay is algebraic~\cite{Goe98,Benzi12}. This decay is exploited to devise
fast numerical algorithms for electronic structure calculations. Many of these algorithms implicitly assume that the (inverses of the) Cholesky factors of the density matrix inherit the decay properties
of the density matrix. Benzi, Boito, and Razouk recently showed that this assumption is indeed justified in case of
exponential decay~\cite{Benzi12}. However, for other types of decay, such as algebraic decay, it has been
an open problem whether the Cholesky factors do indeed possess the same off-diagonal decay as the density matrix.

In astronomy, the detection and identification of signals must be done in the presence of cosmic microwave background radiation. This is a specific instance of detecting sparse, and possibly faint, signals in the presence of noise. In \cite{HallJin09}, methods are developed to detect signals in the case that the noise is correlated. Such methods work within bounds which depend on the sparsity and strength of the signal, and require off-diagonal decay of the correlation matrix. Essential to this method is that the Cholesky factors of the correlation matrix exhibit the same form of decay.

In many applications one is interested in constructing an orthonormal system consisting of vectors that 
are well localized, e.g. see~\cite{CM06,Str05,GNG08} and \cite[Chapter 9]{mallat} for some examples.
A standard approach is to apply the Gram-Schmidt algorithm to a set of vectors, which is of course nothing else
than computing the QR-factorization of a matrix $\bA$, where $\bA$ contains the initial set of vectors as its
columns. When the matrix $\bA$ is localized, then it would be desirable to know (and often it is tacitly assumed,
without any proof) that the orthonormal vectors obtained via Gram-Schmidt inherit these localization properties. 
In numerical analysis, the off-diagonal decay of the Cholesky factors or of matrix functions such as the matrix
exponential, can be exploited to construct efficient preconditioners~\cite{BG99,Iserles00,BR07}. 
Current theory only covers the case where the matrix is banded or has exponential off-diagonal decay.

Furthermore, various signal processing algorithms for wireless communications involve the QR-factorization of a 
large matrix that represents the wireless communication channel~\cite{SHDK11} as an efficient form of ``precoding''.  
Moreover, the LU-factorization and the
Cholesky factorization plays a prominent role in signal processing, filter design, and digital 
communications~\cite{KSH00}, often in connection with the concept of causality. In all these applications, 
knowledge about the off-diagonal decay behavior of the 
matrix factors can greatly help to reduce computational complexity and mitigate truncation errors, 
as well as simplify hardware design.

The results presented in this paper can be seen as fundamental extensions of noncommutative versions of the 
famous Wiener's Lemma. 
Recall that these noncommutative generalizations of Wiener's Lemma state that under certain conditions the
inverse $\mbf{A}^{-1}$ of a matrix $\mbf{A}$ will indeed inherit the off-diagonal decay properties of $\mbf{A}$,
see e.g.~\cite{jaffard90,Bas90,B96,Kur99,GL03,Sun07,Gro10}.
Informally, this {\em Wiener property}  can be stated as ``{\em if $\mbf{A}$ is localized, then so is $\mbf{A}^{-1}$}''.
It has been an open problem whether and under which conditions the Wiener property extends to matrix
factorizations. The results in this paper provide affirmative answers.

We will show that for a wide range of off-diagonal decay, localized matrices give rise to 
LU-, Cholesky, QR-, and polar factorizations, whose factors inherit this localization. 
Below we state two examples of the type of results we prove in this paper. In both theorems the algebra $\mathscr{A}$ 
represents a Banach algebra that describes localized matrices in  the form of some off-diagonal decay.
To give a concrete example, we might assume that $\bA \in \mathscr{A}$ satisfies polynomial off-diagonal decay, i.e.,
there exists a parameter $s>1$ and a constant $C>0$ such that the entries $a_{jk}$ of $\bA$ obey
$$|a_{jk}| \leq C(1+|j-k|)^{-s}, \qquad \forall j,k \in \Z.$$
The precise statements of the theorems with the accompanying definitions are given in the following sections. 

\begin{thm}
Let $\mathscr{B}_c$ be the closure of the algebra of band matrices with respect to the operator norm. Let
$\mathscr{A} \subset \mathscr{B}_c$ be an inverse-closed sub-algebra satisfying certain technical conditions, given
in Section 3. Suppose $\mbf{A} \in \mathscr{A}$. Then, if $\mbf{A} =\mbf{LU}$ in $\mathscr{B}_c$, we have $\mbf{L,U} \in \mathscr{A}$. 
\end{thm}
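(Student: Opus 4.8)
The plan is to represent the inverse $\bU^{-1}$ of the upper factor columnwise in terms of the inverses of the finite principal sections of $\bA$, to deduce $\bU^{-1}\in\mathscr{A}$ from a finite–section (Wiener–type) localization estimate available in $\mathscr{A}$, and finally to recover $\bL$ and $\bU$ themselves from inverse‑closedness and the algebra property. Normalize so that $\bL$ is unit lower triangular; by the definition of an LU‑factorization in $\mathscr{B}_c$ (Section 3), $\bL$ and $\bU$ are then invertible in $\mathscr{B}_c$ with $\bL^{-1}$ lower and $\bU^{-1}$ upper triangular, the diagonal of $\bU$ is bounded away from $0$, and $\bA^{-1}=\bU^{-1}\bL^{-1}$. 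Let $(P_k)$ be the chain of coordinate projections defining lower triangularity and set $\bA_k=P_k\bA P_k$, $\bL_k=P_k\bL P_k$, $\bU_k=P_k\bU P_k$ on $\operatorname{ran}P_k$. Using that $\bL,\bL^{-1}$ are lower and $\bU,\bU^{-1}$ upper triangular, the elementary identities $P_k\bM(I-P_k)=0$ (for lower $\bM$) and $(I-P_k)\bM P_k=0$ (for upper $\bM$) give $\bA_k=\bL_k\bU_k$ (this is the LU‑factorization of the $k$‑th section, which is invertible because $\bU_k$ has nonvanishing diagonal), $\bL_k^{-1}=P_k\bL^{-1}P_k$ and $\bU_k^{-1}=P_k\bU^{-1}P_k$. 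Since $\bL_k$ is unit lower triangular, $\bL_k^{-1}e_k=e_k$, and hence
\[
  \bU^{-1}e_k = P_k\bU^{-1}P_k e_k = \bU_k^{-1}e_k = \bU_k^{-1}\bL_k^{-1}e_k = \bA_k^{-1}e_k ,
\]
i.e.\ the $k$‑th column of $\bU^{-1}$ is the ``last'' column of $(P_k\bA P_k)^{-1}$ (padded by zeros), and so depends only on the $k\times k$ leading corner of $\bA$.

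The next step is the uniform localization bound $M:=\sup_k\|(P_k\bA P_k)^{-1}\|_{\mathscr{A}}<\infty$. Uniform \emph{boundedness} of these inverses is automatic, since $(P_k\bA P_k)^{-1}=\bU_k^{-1}\bL_k^{-1}=P_k\bU^{-1}P_k\cdot P_k\bL^{-1}P_k$ has norm at most $\|\bU^{-1}\|\,\|\bL^{-1}\|$; the point is to upgrade this to a bound in the finer norm of $\mathscr{A}$. This is exactly a finite–section statement for the localized algebra $\mathscr{A}$ — that stability of the finite sections of an operator in $\mathscr{A}$ propagates the off‑diagonal decay to their inverses — and it is where the hypotheses $\bA\in\mathscr{A}$ and the technical conditions on $\mathscr{A}$ are invoked (presumably a lemma of Section 3, or an immediate consequence of those conditions). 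Granting $M<\infty$, the truncations $\bU_n^{-1}=P_n\bU^{-1}P_n$ have all their columns among the columns of $\bA_1^{-1},\dots,\bA_n^{-1}$, so by the solidity/regularity of $\mathscr{A}$ they are dominated entrywise by a single element of $\mathscr{A}$ built from the common off‑diagonal envelope of the $\bA_k^{-1}$, whence $\|\bU_n^{-1}\|_{\mathscr{A}}\le C\,M$ for all $n$; since $\bU_n^{-1}\to\bU^{-1}$ entrywise, a Fatou‑type property of $\mathscr{A}$ yields $\bU^{-1}\in\mathscr{A}$.

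To finish: $\bL=\bA\bU^{-1}\in\mathscr{A}$ because $\mathscr{A}$ is an algebra; since $\bL$ is invertible in $\mathscr{B}_c$, inverse‑closedness gives $\bL^{-1}\in\mathscr{A}$; and then $\bU=\bL^{-1}\bA\in\mathscr{A}$, again by the algebra property. This establishes $\bL,\bU\in\mathscr{A}$.

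The step I expect to be the main obstacle is precisely the uniform estimate $\sup_k\|(P_k\bA P_k)^{-1}\|_{\mathscr{A}}<\infty$: its operator‑norm version is free from the existence of the factorization in $\mathscr{B}_c$, but converting it to the stronger norm of $\mathscr{A}$ is the real content and must use $\bA\in\mathscr{A}$ together with the structural assumptions on $\mathscr{A}$ (solidity, regularity, and good behaviour under the finite section method). A minor secondary point is the bookkeeping of the triangular structure in the bi‑infinite setting, where the sections $P_k\bA P_k$ are themselves semi‑infinite and one needs the corresponding localized finite–section statement on a half‑line.
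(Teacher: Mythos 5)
Your reduction of the columns of $\bU^{-1}$ to columns of the half-line compressions $(P_k\bA P_k)^{-1}$ is correct, but the whole argument then rests on the claim $\sup_k\|(P_k\bA P_k)^{-1}\|_{\mathscr{A}}<\infty$, which you defer to ``presumably a lemma of Section 3.'' No such lemma exists there, and the claim does not follow from the hypotheses the theorem actually carries (inverse-closedness in $\Blt$, strong decomposability, $\mathscr{B}_\gamma\subset\mathscr{A}\subseteq\mathscr{B}_c$, and $\|\cdot\|_{\Blt}\le c\|\cdot\|_{\mathscr{A}}$). Inverse-closedness gives, for each \emph{fixed} $k$, that $\bigl(P_k\bA P_k+(\bI-P_k)\bigr)^{-1}\in\mathscr{A}$ --- and even this presupposes that $\mathscr{A}$ is stable under the corner truncations $\bA\mapsto P_k\bA P_k$, which is not among the Section~3 conditions --- but it gives no control of the $\mathscr{A}$-norms of these inverses uniformly over the infinite family $k\in\Z$. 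Upgrading ``the inverse lies in $\mathscr{A}$'' to ``the inverse has $\mathscr{A}$-norm controlled by $\|\bA\|_{\mathscr{A}}$ and the operator norm of the inverse'' is the problem of norm-controlled inversion, a separate and nontrivial property that fails for general inverse-closed Banach algebras; the known finite-section results (e.g.\ the cited work of Gr\"ochenig--Rzeszotnik--Strohmer and Gr\"ochenig--Klotz) concern special algebras and special (symmetric, positive-definite) sections, whereas here the compressions are semi-infinite and $\bA$ need not be Hermitian. Worse, once the theorem is known your uniform bound follows (for solid algebras) from $(P_k\bA P_k)^{-1}=P_k\bU^{-1}P_k\,P_k\bL^{-1}P_k$, so the key step you leave unproved is essentially equivalent to the statement being proved. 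The ``solidity/regularity'' and ``Fatou-type'' properties you invoke at the end are likewise extra assumptions not present in the abstract theorem (they do hold for the concrete algebras of Definition~2.3, but then you are proving a different, narrower statement).

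For comparison, the paper's proof avoids any finite-section or uniform-inversion statement. The abstract route (Section~3) embeds $\mathscr{A}$ into $C(\T,\Blt)$ via $f_{\bA}(\theta)=M(\theta)\bA M(\theta^{-1})$, preconditions $\bA$ as $\bA'=[f_{\bL}(\varepsilon)]^{-1}\bL\bU[f_{\bU}(1/\varepsilon)]^{-1}$ with $\varepsilon$ close to $1$ so that $\|\bA'-\bI\|_{\Blt}<1$, applies the Gohberg--Leiterer canonical factorization theorem to factor $\bA'$ inside $\mathscr{A}$, and transfers back to $\bL,\bU$ using Theorem~8.14 of Baskakov--Krishtal together with uniqueness of the LU-factorization up to a diagonal factor. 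The concrete route (Section~4), valid for the algebras of Definition~2.3, bounds the entries of $\bL$ and $\bL^{-1}$ by entries of the two \emph{fixed} semi-infinite corners $\bA_{11}$ and $\bB_{22}$ (of $\bA$ and $\bA^{-1}$) and their inverses, so that inverse-closedness is applied only finitely many times --- precisely the device that sidesteps the uniform-in-$k$ control your argument needs. To salvage your approach you would have to either prove a norm-controlled inversion theorem for the half-line compressions in the algebras at hand (a substantial result in its own right), or reorganize the estimates as in the paper's Lemma~4.2 so that only fixed inverses occur.
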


\begin{thm}
Let $\mathscr{A}$ be one of the decay algebras specified in Section 4, such as the algebra with polynomial off-diagonal decay discussed above. Assume that $\mbf{A} \in \mathscr{A}$. Then, if $\mbf{A} = \mbf{QR}$, we have $\mbf{Q,R} \in \mathscr{A}$.
\end{thm}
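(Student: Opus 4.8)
\emph{The plan.} The approach is to reduce the QR-factorization to the Cholesky factorization of the Gram matrix $\bA^*\bA$, to deduce the needed Cholesky statement from the LU-result, and finally to recover $\bQ$ by a single multiplication. Throughout I use the structural properties that the algebras of Section~4 are required to have: each such $\mathscr{A}$ is a Banach $*$-algebra of operators on $\ell^2$ --- in particular closed under the involution $\bA\mapsto\bA^*$ and under products --- it is inverse-closed in $\Blt$ (the noncommutative Wiener's Lemma), it contains the bounded diagonal matrices, and it sits inside $\mathscr{B}_c$ and satisfies the hypotheses of Section~3, so that Theorem~1.1 applies to it.

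\emph{Step 1 (from QR to Cholesky).} Let $\bA\in\mathscr{A}$ be invertible, with QR-factorization $\bA=\bQ\bR$, $\bQ$ unitary and $\bR$ upper triangular with positive diagonal. Then $\bA^*\bA=\bR^*\bQ^*\bQ\bR=\bR^*\bR$, so $\bR$ is exactly the (unique) Cholesky factor of $\bM:=\bA^*\bA$. Since $\mathscr{A}$ is closed under involution and products, $\bM\in\mathscr{A}$; and $\bM$ is self-adjoint, positive, and --- $\bA$ being invertible --- bounded below, hence positive definite. It thus suffices to show that the Cholesky factor of a positive-definite element of $\mathscr{A}$ again lies in $\mathscr{A}$.

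\emph{Step 2 (Cholesky from LU, and recovering $\bQ$).} Take the LU-factorization $\bM=\bL\bU$ in $\mathscr{B}_c$, with $\bL$ unit lower triangular and $\bU$ upper triangular with positive diagonal (its existence in $\mathscr{B}_c$ for positive-definite $\bM$ is part of the Section~3 framework). From $\bM=\bM^*$ and the uniqueness of LU one gets $\bU=\bD\bL^*$ for a positive diagonal matrix $\bD$, whence $\bM=(\bL\bD^{1/2})(\bD^{1/2}\bL^*)$ and the Cholesky factor is $\bR=\bD^{1/2}\bL^*$. Now $\bL,\bU\in\mathscr{A}$ by Theorem~1.1, $\bD$ is the diagonal part of $\bU$ and hence lies in $\mathscr{A}$, and $\bD^{1/2}\in\mathscr{A}$ since $\mathscr{A}$ contains the bounded diagonal matrices and $\bD$ is bounded away from $0$; therefore $\bR\in\mathscr{A}$. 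Finally $\bR$ is invertible with upper triangular inverse, so $\bR^{-1}\in\mathscr{A}$ by inverse-closedness, and $\bQ=\bA\bR^{-1}\in\mathscr{A}$ as a product in the algebra. (If one would rather not assume $\bA$ invertible, one can take as the hypothesis that $\bA^*\bA$ has a Cholesky factorization in $\Blt$ with invertible, positive-diagonal factor --- exactly the condition under which the QR-factorization is well defined and unique --- and the computation is unchanged.)

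\emph{Where the difficulty lies.} Steps~1--2 are purely formal; the substantive point --- and, I expect, the main obstacle --- is the input to Step~2, namely establishing that each decay algebra of Section~4 satisfies the technical hypotheses of Section~3 (so that the LU-factors stay in it). Compared with the pure band-matrix case, the subtlety is that mere membership in $\mathscr{B}_c$ has to be upgraded to a quantitative bound on the off-diagonal decay, a property that is \emph{not} preserved under arbitrary operator-norm limits: one must follow the decay through the recursion --- i.e.\ through the Schur complements --- that produces $\bL$ and $\bU$, controlling the successive Schur complements (and, in the symmetric reduction, the square root $\bM^{1/2}$) by means of inverse-closedness together with the holomorphic functional calculus available in $\mathscr{A}$. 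A direct treatment via the Gram--Schmidt recursion applied to $\bA$ is conceivable but more cumbersome; routing everything through $\bA^*\bA$ reuses the LU machinery wholesale. Granting the LU/Cholesky-preservation statement, Theorem~1.2 follows at once.
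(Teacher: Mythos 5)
Your proposal is correct and follows essentially the same route as the paper: pass to $\bV=\bA^*\bA=\bR^*\bR\in\mathscr{A}$, show its Cholesky factor stays in $\mathscr{A}$ by writing the LU-factorization of the positive definite matrix as $\bL\bD\bL^*$ and absorbing $\bD^{1/2}$, and then recover $\bQ=\bA\bR^{-1}$ using inverse-closedness. The only cosmetic differences are that the paper gets the Cholesky factorization directly from Theorem~\ref{GLdec} (rescaling so that $\|\bI-\alpha\bV\|_{\Blt}<1$, which settles existence and membership in $\mathscr{A}$ in one step, rather than citing the general LU theorem plus a separate existence claim) and that it handles an arbitrarily normalized $\bR$ via a unitary diagonal factor $\bR=\bD\bC$.
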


The LU-factorization turns out to be the most challenging one for which to prove that the matrix factors exhibit
the same localization properties as the original matrix. 
We present two quite different methods for obtaining such localization results for the LU-factorization, see
Sections~\ref{s:algebraic} and~\ref{s:schur}.
The first method is more abstract and algebraic, while the second is more concrete and computational. The first method 
has the advantage of being applicable to a broad array of types of decay. To prove our theorem, we only need to 
assume that the matrix belongs to some inverse closed Banach algebra, and that it possesses an LU-factorization in 
a very weak decay algebra. The second method requires us to consider each form of decay separately, but it gives 
a better quantitative understanding of the decay properties of the factors. Moreover, we need to assume only that 
the matrix possesses an LU-factorization, but nothing about the factors themselves. In Section~\ref{s:cholesky}
we  prove similar localization results for the QR-factorization, the Cholesky factorization, and the polar
factorization.  In fact, for the latter the result is 
quickly obtained from basic facts and does not actually require the main machinery developed in this paper. 
Finally, localization of matrix functions, such as the matrix exponential, is the topic
of Section~\ref{s:functional}.

Prior work on the topic of localized matrix factorizations starts with the seminal work
of Wiener on spectral factorization, see e.g.~\cite{Wie32}. His research has led to a plethora of
extensions, generalizations, and refinements, which we cannot possibly review here.
This work, however, is firmly rooted in the commutative setting, as the operators under consideration
correspond to convolution operators (including even those papers dealing with matrix-valued convolution
operators).
To our knowledge the first result that truly addresses localization of matrix factorizations in
the noncommutative setting is due to Gohberg, Kasshoek, and Woerdeman. In~\cite{GKW89}, the authors
show if a positive definite matrix is in the so-called non-stationary Wiener algebra ${\cal W}$
(in current terminology, ${\cal W}$ is the unweighted Baskakov-Gohberg-Sj\"ostrand algebra,
see Definition~\ref{weights}, item~4), then the Cholesky factors belong to the same algebra.
Our research was definitely influenced by their paper as well as by its notable precursors
\cite{GL72, GF67}. We also gladly acknowledge inspiration
by the work of Benzi and coauthors, who have analyzed the localization of (inverses of) Cholesky
factors for the case of band matrices~\cite{Benzi12}.
Finally, Baskakov's work on the spectral theory of Banach modules \cite{B04} and especially
causal operators~\cite{BK05, BK04}
(in collaboration with one of the authors of this paper), has paved the way for one of the approaches
presented in this paper.

The reader may wonder why we have not yet mentioned the eigenvalue decomposition and the singular value 
decomposition. It is easy to see that localization of the matrix is in general not sufficient to ensure 
localization of its eigenvectors or singular vectors. Take for example a bi-infinite Laurent  matrix
(also called Toeplitz matrix), then we know that its (generalized) eigenvectors are given by the
complex exponentials, which have no decay whatsoever, regardless of how strong the off-diagonal decay of
the Laurent matrix may be.
Nevertheless, under certain conditions the eigenvectors or singular vectors of a matrix {\em do inherit}
the localization properties of a matrix. We will report on these results in a forthcoming paper.

\section{Preliminaries}
\label{s:preliminaries}

\subsection{Notation}

For a vector $\by = \{y_k\}_{k\in\Z} \in \ell^2(\Z)$ we define its projection $P_n \by$ to be
\begin{equation}
P_n \by = (\dots,0,y_{-n},y_{-n+1},\dots,y_{n-1},y_n,0,\dots).
\label{projn}
\end{equation}
The range of $P_n$ is written as $\Image P_n$. Since $\Image P_n$ is a subspace of $\ell^2(\Z)$ of dimension
$2n+1$, it can be identified with $\C^{2n+1}$.
Given a matrix $\bA = (a_{ij}), i,j\in\Z$ we denote $\bA^{(n)} = P_n \bA P_n$ restricted to $\Image P_n$, that is
\begin{equation}
\bA^{(n)}: \Image P_n \to \Image P_n \subset \ell^2(\Z).
\label{}
\end{equation}
By definition $\bA^{(n)}$ is an operator acting on $\Image P_n$, and we can interpret $\bA^{(n)}$ as
a finite $(2n+1)\times (2n+1)$ matrix with entries $\{a_{ij}\}_{|i|,|j| \leq n}$, acting on $\C^{2n+1}$. 

Recall that a matrix $\bA$ is lower triangular if $a_{ij} = 0$ for $i>j$,
and $\bA$ is upper triangular if $a_{ij} = 0$ for $i<j$.
We shall use the notations $\mathscr{L,L^*,D}$ to refer to the sub-algebras of $\mathscr{B}(\ell^2)$ consisting of 
lower-triangular, upper-triangular, and diagonal matrices, respectively. We shall denote by $\mathscr{L}_0$ 
and $\mathscr{L}_0^*$ the sub-algebras of strictly lower-triangular and strictly upper-triangular matrices, 
respectively.

\subsection{Matrix Factorizations}

For the convenience of the reader we briefly review the definitions of the matrix factorizations under consideration. 

\begin{defn}
\label{def:factorizations}
Let $\mathscr{A}$ be a Banach algebra of matrices in $\mathscr{B}(\ell^2)$ and assume that $\mbf{A}\in\mathscr{A}$. \\
1) 
We say that $\mbf{A}\in\mathscr{A}$ admits an 
\emph{LU-factorization} 
in $\mathscr{A}$ if $\mbf{A} = \mbf{LU}$, where $\mbf{L}, \mbf{L}^{-1} \in \mathscr{L}\cap\mathscr{A}$ and
$\mbf{U}, \mbf{U}^{-1}\in\mathscr{L}^{\ast}\cap\mathscr{A}$. \\ 
2) $\mbf{A}\in \mathscr{A}$ admits a \emph{QR-factorization} in $\mathscr{A}$ 
if $\mbf{A} = \mbf{QR}$, where $\mbf{R}, \mbf{R}^{-1} \in \mathscr{L^{\ast}}\cap\mathscr{A}$
and $\bQ\in \mathscr{A}$ is a unitary matrix. \\
3) Assume $\bA$ is hermitian positive definite. We say that $\mbf{A}\in \mathscr{A}$ admits a \emph{Cholesky factorization} 
in $\mathscr{A}$ if $\mbf{A} = \mbf{C C^{\ast}}$, where $\mbf{C}, \mbf{C}^{-1} \in \mathscr{L}\cap\mathscr{A}$
and the diagonal entries of $\mbf{C}$ are positive.\\
4) The {\em polar factorization} of $\mbf{A}\in \mathscr{A}$ is given by $\bA = \bU \bP$, where $\bU$ 
is a unitary matrix and $\bP$ is a positive-semidefinite Hermitian matrix, and $\bU,\bP \in \mathscr{A}$.
\end{defn}

\begin{rmk}\label{uniqrem}
For bi-infinite matrices the existence and uniqueness of some of these matrix factorizations 
is non-trivial, see~\cite{Arv75,CWS82,vdMRS96,Strang11} for more detailed discussions\footnote{The definitions
of the factorizations in Definition~\ref{def:factorizations} imply that $\bA$ is invertible in $\Blt$.
Some of the factorizations could be defined slightly more generally than we have don here.
For instance in the QR-factorization and the polar factorization
we could replace the unitary matrix by a partial isometry. Many of the results in our paper can be extended to hold
for these more general factorizations, but for clarity of presentation we prefer to work with the
factorizations as defined in Definition~\ref{def:factorizations}.}.
The existence of the QR-factorization and the Cholesky factorization follows for instance from the results
of Section~3 in Arveson's seminal paper~\cite{Arv75}. The existence of the LU-factorization is less clear
(unless we consider finite matrices), the interested reader may want to consult~\cite{ASW86} and the references
therein. A necessary condition that a matrix $\bA$ has an LU-factorization on $\ell^p, 1\le p \le \infty,$ is
that $\bA$ and all $\bA^{(n)}$ are uniformly invertible, i.e., $\sup_n
\{\|(\bA^{(n)})^{-1}\|_\Blt,\|\bA^{-1}\|_\Blt\} < \infty$,
see~\cite{BG70,ASW86}. However, this condition is not sufficient as a counter example in~\cite{ASW86} shows. 
We  observe that if $\mbf{LU} = \bar{\mbf{L}}\bar{\mbf{U}}$ are two different LU-factorizations then 
$\bar{\mbf{L}}^{-1} \mbf{L} = \bar{\mbf{U}} \mbf{U}^{-1}$ is a diagonal matrix, i.e.~the LU-factorization 
is unique up to multiplication by a diagonal matrix. In Section \ref{s:schur} we shall assume that the diagonal entries of $\bL$ are all equal to 1 to avoid ambiguity. For positive definite matrices the existence of such an LU-factorization follows immediately
from the existence of the Cholesky factorization. Indeed, one simply rescales $\bC$ via
multiplication by a diagonal matrix, i.e., if $\bA =\bC \bC^\ast$ then $\bA = \bL \bU$, where $\bL = \bC \bD^{-1}$ and the main diagonal of $\bD$ coincides with the main diagonal of $\bC$.
\end{rmk}

\subsection{Decay Algebras}

We first consider some typical matrix norms that express various forms of off-diagonal decay. In applications, one
might encounter such forms of decay in  signal and image processing, digital communication, quantum chemistry 
and quantum physics. Off-diagonal decay is  quantified by means of weight functions.

\begin{defn}\label{weights}
A non-negative function $v$ on $\mathbb{Z}$ is called an \emph{admissible weight} if it satisfies the following properties:
\begin{enumerate}
\item
$v$ is even and normalized so that $v(0) = 1$;
\item
$v$ is sub-multiplicative, i.e. $v(j+k) \leq v(j)v(k)$ for all $j,k \in \mathbb{Z}$;
\item
$v$ satisfies the \emph{Gelfand-Raikov-Shilov (GRS)} condition~\cite{gelfandraikov}:
$\lim\limits_{n \to \infty} v(nk)^{\frac{1}{n}} = 1$ 
for all $k \in \mathbb{Z}$.
\end{enumerate}
\end{defn}

The assumption that $v$ is even assures that the corresponding Banach algebra is closed under taking the adjoint. The GRS property is crucial for the inverse-closedness of the Banach algebra, as we will see below. The standard weight functions on $\mathbb{Z}$ are of the form
\[ v(k) = e^{a\cdot d(k)^b}(1+d(k))^s, \]
where $d(k)$ is a norm on $\mathbb{Z}$. Such a weight is sub-multiplicative when $a,s \geq 0$ and $0 \leq b \leq
1$, and satisfies the GRS condition if and only if $0 \leq b < 1$.

\begin{defn}\label{algebras}
We consider the following types of off-diagonal decay.
\begin{enumerate}
\item
The \emph{Jaffard Class} \cite{jaffard90}, denoted $\mathscr{A}_s$, is the collection of matrices $\mbf{A} = (a_{jk}), j,k \in \mathbb{Z}$ such that
\begin{equation}
\label{jaffard}
|a_{jk}| \leq C(1+|j-k|)^{-s}, 
\end{equation}
endowed with the norm
\[ \|\mbf{A}\|_{\mathscr{A}_s} := \sup_{j,k \in \mathbb{Z}}|a_{jk}(1+|j-k|)^s. \]
\item
More generally, let $v$ be an admissible weight such that $v^{-1} \in\ell^1({\mathbb{Z}})$ and $v^{-1}*v^{-1} \leq Cv^{-1}$. Then we denote by $\mathscr{A}_v$ the collection of matrices satisfying
\begin{equation}
\label{generalized_jaffard}
 |a_{jk}| \leq Cv^{-1}(j -k), 
\end{equation}
endowed with the norm
\[ \|\mbf{A}\|_{\mathscr{A}_v} := \sup_{j,k \in \mathbb{Z}}|a_{jk}|v(j-k). \]
\item {\em Schur-type algebras:}
Let $v$ be an admissible weight. Then we denote by $\mathscr{A}_v^1$ the collection of matrices $\mbf{A} = (a_{jk}),j,k \in \mathbb{Z}$ such that
\begin{equation}
 \sup_{j \in \mathbb{Z}}\sum_{k \in \mathbb{Z}}|a_{jk}|v(j-k) < \infty  \hspace{2pc} \mbox{and}  \hspace{2pc} \sup_{k \in \mathbb{Z}}\sum_{j \in \mathbb{Z}}|a_{jk}|v(j-k) < \infty, 
\label{schur}
\end{equation}
endowed with the norm
\[ \|\mbf{A}\|_{\mathscr{A}_v^1}:= \max\left\{ \sup_{j \in \mathbb{Z}}\sum_{k \in \mathbb{Z}}|a_{jk}|v(j-k) < \infty,\ \sup_{k \in \mathbb{Z}}\sum_{j \in \mathbb{Z}}|a_{jk}|v(j-k) < \infty \right\}. \]
\item
Let $v$ be an admissible weight. Then, the \emph{Gohberg-Baskakov-Sj\"{o}strand class}, denoted by $\mathscr{C}_v$, is the collection of matrices such that the norm
\[ \|\mbf{A}\|_{\mathscr{C}_v} := \sum_{j \in \mathbb{Z}} \sup_{k\in\mathbb{Z}}|a_{k,k-j}|v(j) = \inf_{\alpha \in \ell^1_v}\left\{ \|\alpha\|_{\ell^1_v} : |a_{jk}| \leq \alpha(j-k) \right\} \]
is finite.
\end{enumerate}
\end{defn}

If $\mathscr{A}$ is one of the Banach algebras defined in~\ref{algebras}, then any matrix $\bA\in\mathscr{A}$
is bounded on $\ell^p, 1\le p \le \infty$, see~\cite{GRS10}. The results derived in this paper hold for a variety
of other Banach algebras that describe off-diagonal decay, such as the ones in~\cite{Sun07}. But for clarity of presentation
we mainly focus on the Banach algebras introduced in Definition~\ref{algebras}.

\if 0
These Banach spaces of matrices possess the following properties.
\begin{lem}[\cite{GRS10}]
Let $v$ be an admissible weight and $\mathscr{A}$ be one of the algebras defined above. Then $\mathscr{A}$ has the following properties.
\begin{enumerate}
\item
$\mathscr{A}_v^1$ and $\mathscr{C}_v$ are involutive Banach algebras with the norms defined above. Both $\mathscr{A}_v$ and $\mathscr{A}_s, s>1$ can be equipped with equivalent norms so that they become involutive Banach algebras.
\item
If $\mbf{A} \in \mathscr{A}$, then $\mbf{A}$ is bounded on $\ell^2(\mathbb{Z})$.
\item
If $\mbf{A} \in \mathscr{A}$ and $|b_{jk}| \leq |a_{jk}|$ for all $j,k \in \mathbb{Z}$, then $\mbf{B} \in \mathscr{A}$ and $\|\mbf{B}\|_\mathscr{A} \leq \|\mbf{A}\|_\mathscr{A}$. ($\mathscr{A}$ is a solid algebra).
\end{enumerate}
\end{lem}
\fi

We recall that a Banach algebra $\mathscr{A}$ is \emph{inverse-closed} in $\mathscr{B}(\ell^2(\mathbb{Z}))$ if for
every $\mbf{A} \in \mathscr{A}$ that is invertible on $\ell^2(\mathbb{Z})$ we have that $\mbf{A}^{-1} \in
\mathscr{A}$. The matrix algebras above are inverse-closed essentially when $v$ is an admissible weight. The
precise statement is slightly more involved, because we need to be a bit meticulous about the weights.
\begin{thm}
Let $v$ be an admissible weight.
\begin{enumerate}
\item
Assume that $v^{-1} \in \ell^1(\mathbb{Z})$ and $v^{-1}*v^{-1} \leq Cv^{-1}$. Then $\mathscr{A}_v$ is inverse-closed in $\mathscr{B}(\ell^2(\mathbb{Z}))$. In particular, $\mathscr{A}_s$ possesses this property if $s>1$.
\item
If $v(k)\geq C (1+|k|)^\delta$ for some $\delta > 0$, then $\mathscr{A}_v^1$ is inverse-closed in $\mathscr{B}(\ell^2(\mathbb{Z}))$.
\item
$\mathscr{C}_v$ is inverse closed in $\mathscr{B}(\ell^2(\mathbb{Z}))$ for arbitrary admissible weights.
\end{enumerate}
\end{thm}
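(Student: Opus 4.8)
The plan is to reduce all three parts to one spectral–radius estimate and then to supply that estimate case by case. Under the stated hypotheses on $v$, each of $\mathscr{A}_v$, $\mathscr{A}_v^1$, $\mathscr{C}_v$ is, after passing to an equivalent norm, a unital Banach $*$-algebra with isometric involution (the involution being the matrix adjoint, and isometry holds because $v$ is even), and it is faithfully and continuously represented on $\ell^2(\Z)$, i.e.\ continuously included in $\Blt$; see~\cite{GRS10}. By Hulanicki's lemma, as stated e.g.\ in~\cite{Gro10}, such an algebra $\mathscr{A}$ is inverse-closed in $\Blt$ as soon as
\[
  \rho_{\mathscr{A}}(\bA)=\|\bA\|_{\Blt}\qquad\text{for every self-adjoint }\bA\in\mathscr{A},
\]
$\rho_{\mathscr{A}}$ denoting the spectral radius in $\mathscr{A}$. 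Since $\rho_{\mathscr{A}}(\bA)\ge\|\bA\|_{\Blt}$ is automatic from the inclusion $\mathscr{A}\subset\Blt$, the whole problem reduces to the power estimate $\limsup_{n\to\infty}\|\bA^n\|_{\mathscr{A}}^{1/n}\le\|\bA\|_{\Blt}$ for self-adjoint $\bA$. This is exactly the step at which the off-diagonal decay of $\bA$ and the GRS condition on $v$ must be used, and it is the main obstacle.

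For $\mathscr{C}_v$ (part~3) I would establish the power estimate through the diagonal decomposition $\bA=\sum_{j\in\Z}\bA_{(j)}$, where $\bA_{(j)}$ keeps only the entries $a_{k,k-j}$. Each $\bA_{(j)}$ is a weighted shift, so $\|\bA_{(j)}\|_{\Blt}=\sup_k|a_{k,k-j}|=:d(j)$; the sequence $d$ is even, nonnegative, belongs to $\ell^1_v(\Z)$ with $\|d\|_{\ell^1_v}=\|\bA\|_{\mathscr{C}_v}$, and obeys $d(j)\le\|\bA\|_{\Blt}$ for every $j$. Multiplying out $\bA^n$ one diagonal at a time gives $(\bA^n)_{(j)}=\sum_{m_1+\cdots+m_n=j}\bA_{(m_1)}\cdots\bA_{(m_n)}$, hence the two complementary bounds $\|(\bA^n)_{(j)}\|_{\Blt}\le\|\bA\|_{\Blt}^{\,n}$ and $\|(\bA^n)_{(j)}\|_{\Blt}\le d^{*n}(j)$. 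Using the first bound for $|j|$ below a cut-off $M=M(n)$ and a tail estimate for $d^{*n}$ beyond $M(n)$ — obtained from $d\in\ell^1_v$ together with the submultiplicativity and the evenness of $v$ — and letting $M(n)$ grow sub-exponentially, one reaches $\|\bA^n\|_{\mathscr{C}_v}=\sum_j v(j)\|(\bA^n)_{(j)}\|_{\Blt}\le w(n)\,\|\bA\|_{\Blt}^{\,n}$ with $w(n)^{1/n}\to1$; it is precisely the GRS condition that forces the weight-dependent factors in $w(n)^{1/n}$ down to $1$. The delicate part — the reason I would expect to spend most of the work here — is that the tail of $d^{*n}$ cannot be disposed of by a single inequality without the constants piling up, so the estimate must be organised as a coupled induction on $n$; this is carried out in~\cite{Bas90,GL03} and surveyed in~\cite{Gro10}, which is also why no assumption on $v$ beyond admissibility is needed. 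The Schur-type algebra $\mathscr{A}_v^1$ of part~2 is treated by the same scheme, the hypothesis $v(k)\ge C(1+|k|)^\delta$ being what secures the $*$-algebra structure and the embedding needed for Hulanicki's lemma, cf.~\cite{Sun07}.

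For $\mathscr{A}_v$, and in particular the Jaffard class $\mathscr{A}_s$ (part~1), one wants to keep the \emph{pointwise} decay rate, and here the natural route is to use that the hypotheses $v^{-1}\in\ell^1(\Z)$ and $v^{-1}*v^{-1}\le Cv^{-1}$ make $\mathscr{A}_v$ a \emph{differential $*$-subalgebra} of $\Blt$: there is $\theta\in(0,1)$ such that, roughly,
\[
\|\bA\bB\|_{\mathscr{A}_v}\lesssim \|\bA\|_{\mathscr{A}_v}\|\bB\|_{\Blt}^{\theta}\|\bB\|_{\mathscr{A}_v}^{1-\theta}+\|\bA\|_{\Blt}^{\theta}\|\bA\|_{\mathscr{A}_v}^{1-\theta}\|\bB\|_{\mathscr{A}_v},
\]
which one proves by splitting the sum $\sum_l a_{jl}b_{lk}$ according to whether $l$ is nearer $j$ or $k$ and then invoking $v^{-1}*v^{-1}\le Cv^{-1}$. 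Iterating this inequality yields directly a sub-exponential power bound $\|\bA^n\|_{\mathscr{A}_v}\le w(n)\|\bA\|_{\Blt}^{\,n}$, after which Hulanicki's lemma again concludes; for the differential-subalgebra machinery see~\cite{Gro10}, for Jaffard's original Neumann-series proof in the case $\mathscr{A}_s$ see~\cite{jaffard90}, and for the general weighted statement see~\cite{Sun07}. Finally, the assertion about $\mathscr{A}_s$ for $s>1$ is the special case $v(k)=(1+|k|)^s$: then $v^{-1}\in\ell^1(\Z)$ exactly when $s>1$, and $v^{-1}*v^{-1}\le Cv^{-1}$ holds automatically.
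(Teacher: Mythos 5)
You should note at the outset that the paper does not prove this theorem at all: it is stated as a summary of known results, and the paragraph immediately following it attributes part 1 to Jaffard and Baskakov (with another proof by Gr\"ochenig--Leinert), part 2 to Gr\"ochenig--Leinert, and part 3 to Gohberg--Kaashoek--Woerdeman, Sj\"ostrand, and Baskakov. So there is no in-paper argument to compare with; your sketch is in effect an outline of the proofs in those references, and its skeleton --- reduce inverse-closedness to the spectral-radius identity for self-adjoint elements via Hulanicki's lemma, and obtain that identity from a sub-exponential bound $\|\bA^n\|_{\mathscr{A}}\le w(n)\|\bA\|_{\Blt}^n$ --- is indeed the framework used there (and your treatment of the Jaffard class $\mathscr{A}_s$, $s>1$, via the differential-norm inequality is sound, cf.~\cite{GK10}). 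However, for parts 2 and 3 you explicitly defer the entire analytic core (the control of the tail of $d^{*n}$, resp.\ the weighted Schur estimate) to \cite{Bas90,GL03,Gro10}, so as a self-contained proof the proposal is incomplete precisely where the difficulty lies; also, the role you assign to the hypothesis $v(k)\ge C(1+|k|)^\delta$ in part 2 is not right --- the $*$-algebra structure and the embedding $\mathscr{A}_v^1\subset\Blt$ hold for every admissible weight (by Schur's test, since $v\ge 1$); that hypothesis is needed inside the spectral-radius estimate of \cite{GL03}.

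The genuine gap is in your part 1 for general weights. The differential inequality you propose does not follow from $v^{-1}\in\ell^1$ and $v^{-1}*v^{-1}\le Cv^{-1}$: when you split the sum $\sum_l a_{jl}b_{lk}$ at $|j-l|\ge |j-k|/2$, you must convert $v^{-1}(j-l)$ into $Cv^{-1}(j-k)$, which requires a doubling-type bound $v(m)\le C v(m/2)$ (and you also need $v^{-(1-\theta)}\in\ell^1$ for some $\theta>0$). Both hold for polynomial weights, but fail for admissible subexponential weights such as $v(k)=e^{a|k|^b}$, $0<b<1$ (possibly times a polynomial), which the theorem covers. More tellingly, your part-1 argument never invokes the GRS condition, yet GRS cannot be dispensed with: the weight $v(k)=e^{|k|}(1+|k|)^2$ is even, submultiplicative, satisfies $v^{-1}\in\ell^1$ and $v^{-1}*v^{-1}\le Cv^{-1}$, but violates GRS, and $\mathscr{A}_v$ is then not inverse-closed (take $\bA=\bI-\varepsilon\,\mathbf{S}$ with $\mathbf{S}$ the shift and $\varepsilon e>1$: $\bA$ is banded, hence in $\mathscr{A}_v$, while $\bA^{-1}=\sum_{n\ge0}\varepsilon^n\mathbf{S}^n$ has $n$-th diagonal of size $\varepsilon^n$, so $\sup_n \varepsilon^n v(n)=\infty$). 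Consequently a proof of part 1 along the lines you describe can only cover doubling (polynomial-type) weights; for general admissible subconvolutive $v$ one must bring GRS into the power estimate, as in the arguments of Baskakov and of Gr\"ochenig--Leinert \cite{Bas97,GL04} that the paper cites, rather than through the differential-subalgebra shortcut.
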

While for $C^*$-(sub)algebras inverse-closedness is easy to prove and always true, it is highly non-trivial to
establish inverse-closedness of a Banach algebra. Inverse-closedness for $\mathscr{A}_s$ is due to Jaffard
\cite{jaffard90} and Baskakov \cite{Bas90, Bas97}. For $\mathscr{A}_v$ it was proved by Baskakov \cite{Bas97}, and
a different proof is given in \cite{GL04}. The result for $\mathscr{A}_v^1$ is proven in \cite{GL03}. Inverse-closedness for $\mathscr{C}_v$ with $v \equiv 1$ is due to Gohberg, Kaashoek, and Woerdeman \cite{GKW89}, and was rediscovered by Sj\"{o}strand \cite{Sjo95}. The case of arbitrary weights is due to Baskakov \cite{Bas90, Bas97}, but see also Kurbatov \cite{Kur99} and Blatov \cite{B96}.

\begin{rmk}
We note that for a singly-infinite matrix $\bA \in \mathscr{A}_v$ with admissible weight $v$, the existence of the LU-factorization
of $\bA$ can be derived from Theorem~2 in~\cite{ASW86}. This follows essentially from the fact that 
condition~\eqref{schur} together with the inverse-closedness of $\mathscr{A}_v$ imply the decay condition 
in Theorem~2 of~\cite{ASW86}. 
\end{rmk}

Furthermore we will make use of the following two classes of matrices.
The class of \emph{band} matrices, denoted $\mathscr{B}_b$, is the collection of matrices $\mbf{A} = (a_{jk}), j,k \in \mathbb{Z}$, such that there is some natural number $N=N(\mbf{A})$ with the property $a_{jk} = 0$ if $|j-k| > N$.
The class of matrices with \emph{exponentially decaying} diagonals, denoted $\mathscr{B}_{\gamma}$, is the
collection of matrices $\mbf{A} = (a_{jk}), j,k \in \mathbb{Z}$, such that  $|a_{jk}| \leq C\gamma^{|j-k|}$ for some constants $C = C(\mbf{A}) > 0$ and $\gamma = \gamma(\mbf{A}) \in (0,1)$.

All of the Banach algebras described in Definition~\ref{algebras} are contained in a larger Banach algebra, which 
we will denote
$\mathscr{B}_c$. In some sense, $\mathscr{B}_c$ defines the weakest sort of off-diagonal decay, but we will need to lay a little more groundwork before we are ready to give it a formal definition.

For $\theta \in \mathbb{T}$ and $x \in \ell^2(\mathbb{Z})$, the modulation representation $M: \mathbb{T}\to \ell^2(\mathbb{Z})$ is defined by
\[M(\theta)x(n) = \theta^n x(n).\]
Given any $\mbf{A}\in \mathscr{B}(\ell^2)$ we shall denote by $f_{\mbf{A}} \in L^\infty(\mathbb{T}, \mathscr{B}(\ell^2))$ the function
\begin{equation}\label{algincl}
f_{\mbf{A}}(\theta) = M(\theta)\mbf{A}M(\theta^{-1}), \ \theta \in\mathbb{T}.
\end{equation}
The Fourier series  of the operator $\mbf{A}$ is defined \cite{dL73, Bas97} 
as the Fourier series of the function $f_{\mbf{A}}(\theta) \sim \sum_k \theta^k \mbf{A}_k$. An easy computation shows that the operators $\mbf{A}_k$ are the diagonals of the matrix $\mbf{A}\in \mathscr{B}(\ell^2)$.

\begin{defn}
\label{Mcont}
The algebra of \emph{M-continuous} matrices, denoted $\mathscr{B}_c$, is the collection of all matrices such that the function $f_{\mbf{A}}$ defined above is a continuous map from $\mathbb{T}$ to $\mathscr{B}(\ell^2)$, i.e.~$f_{\mbf{A}} \in C(\mathbb{T}, \mathscr{B}(\ell^2))$. 
\end{defn}

In the following proposition we collect some useful known relationships between the above 
subalgebras, see~\cite{BK11, BK05} and references therein, as well as \cite{GK10, GrKl12}.

\begin{prop}\label{propprop} The following properties hold:
\begin{enumerate}
\item $\mathscr{B}_c(\ell^2) = \overline{\mathscr{B}_b(\ell^2)}$, (because of this property $\mathscr{B}_c(\ell^2)$ is sometimes also
referred to as the algebra of band-dominated operators, see e.g.~\cite{Lin2006});
\item $\mathscr{B}_b(\ell^2) \subseteq \{\mbf{A} \in \mathscr{B}(\ell^2)$: $f_{\mbf{A}}$ admits an extension to an entire function$\}$;
\item $\mathscr{B}_{\gamma}(\ell^2) = \{\mbf{A}\in \mathscr{B}(\ell^2)$: $f_{\mbf{A}}$ admits a holomorphic extension to an annulus
$\{1-\varepsilon < |z| < 1+\varepsilon\}$ for some $\varepsilon > 0$ depending on $\gamma\}$.
\item $\mathscr{B}_b(\ell^2) \subset \mathscr{B}_{\gamma}(\ell^2) \subset \mathscr{C}_1 \subset \mathscr{B}_c(\ell^2)$;
\item $\mathscr{L} \cap \mathscr{B}_c(\ell^2) = \{\mbf{A}\in \mathscr{B}(\ell^2)$: $f_{\mbf{A}}$ admits a holomorphic extension into the unit disc $\mathbb{D} = \{z\in\mathbb{C}: |z|< 1\}$ that is continuous in $\overline{\mathbb{D}}\}$;
\item $\mathscr{L}^* \cap \mathscr{B}_c(\ell^2) = \{\mbf{A}\in \mathscr{B}(\ell^2)$: $f_{\mbf{A}}$ admits a bounded holomorphic extension outside the unit disc $\mathbb{D}$ that is continuous in $\mathbb{C} \backslash{\mathbb{D}}\}$;
\item $\mathscr{L}_0$ and $\mathscr{L}_0^*$ are two-sided ideals in $\mathscr{L}$ and $\mathscr{L}^*$ respectively.
\end{enumerate}
\end{prop}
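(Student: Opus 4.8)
The unifying device is the correspondence $\mathbf{A}\leftrightarrow f_{\mathbf{A}}$ together with the fact that $M(\theta)$ is unitary: hence $\|f_{\mathbf{A}}(\theta)\|=\|\mathbf{A}\|$ for every $\theta\in\mathbb{T}$, the assignment $\mathbf{A}\mapsto f_{\mathbf{A}}$ is a linear isometry of $\mathscr{B}(\ell^2)$ into $L^\infty(\mathbb{T},\mathscr{B}(\ell^2))$, the Fourier coefficients of $f_{\mathbf{A}}$ are precisely the diagonals $\mathbf{A}_k$, and one has the elementary bounds $\|\mathbf{A}_k\|\le\|\mathbf{A}\|$ and $|a_{jk}|\le\|\mathbf{A}_{j-k}\|$ (the latter because $\mathbf{A}_d$ is a weighted shift of norm $\sup_m|a_{m,m-d}|$). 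The plan is to prove the seven items in the order (1), (4), (2), (3), (5)--(6), (7).

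For (1): a band matrix makes $f_{\mathbf{A}}$ a trigonometric polynomial with coefficients in $\mathscr{B}(\ell^2)$, hence continuous, so $\mathscr{B}_b\subseteq\mathscr{B}_c$; and $\mathscr{B}_c$ is norm closed since $\mathbf{A}_n\to\mathbf{A}$ forces $f_{\mathbf{A}_n}\to f_{\mathbf{A}}$ uniformly by the isometry, and a uniform limit of continuous functions is continuous. For the reverse inclusion I would run the classical Fej\'er argument: for $\mathbf{A}\in\mathscr{B}_c$ the Ces\`aro means $\sigma_N f_{\mathbf{A}}$ converge uniformly to $f_{\mathbf{A}}$ (the scalar proof---convolution with the positive Fej\'er kernel, which is an approximate identity---transfers verbatim to continuous $\mathscr{B}(\ell^2)$-valued functions), and $\sigma_N f_{\mathbf{A}}=f_{\mathbf{B}_N}$ for the band matrix $\mathbf{B}_N$ obtained by Fej\'er-weighting the diagonals of $\mathbf{A}$; hence $\|\mathbf{B}_N-\mathbf{A}\|=\|\sigma_N f_{\mathbf{A}}-f_{\mathbf{A}}\|_\infty\to 0$. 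For (4), $\mathscr{B}_b\subset\mathscr{B}_\gamma$ is immediate; $\mathscr{B}_\gamma\subset\mathscr{C}_1$ holds because $\|\mathbf{A}\|_{\mathscr{C}_1}=\sum_j\sup_k|a_{k,k-j}|\le C\sum_j\gamma^{|j|}<\infty$; and $\mathscr{C}_1\subset\mathscr{B}_c$ holds because $\|\mathbf{A}_j\|\le\sup_k|a_{k,k-j}|=:\alpha_j$ with $\sum_j\alpha_j<\infty$ makes the Fourier series $\sum_j\theta^j\mathbf{A}_j$ absolutely and uniformly norm-convergent, so $f_{\mathbf{A}}$ is continuous.

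Items (2) and (3) are the analytic-continuation statements. For (2), a width-$N$ band matrix has $f_{\mathbf{A}}(z)=\sum_{|k|\le N}z^k\mathbf{A}_k$, a Laurent polynomial, which extends holomorphically to $\mathbb{C}\setminus\{0\}$ (and, after multiplication by $z^N$, to an entire $\mathscr{B}(\ell^2)$-valued function). For (3): if $|a_{jk}|\le C\gamma^{|j-k|}$ then $\|\mathbf{A}_k\|\le C\gamma^{|k|}$, so $\sum_k z^k\mathbf{A}_k$ converges in operator norm on the annulus $\gamma<|z|<1/\gamma$, which contains $\{1-\varepsilon<|z|<1+\varepsilon\}$; conversely, a holomorphic extension of $f_{\mathbf{A}}$ to such an annulus yields, via the Cauchy coefficient estimates applied on the circles $|z|=1\pm\varepsilon/2$, a bound $\|\mathbf{A}_k\|\le C'\rho^{|k|}$ with $\rho<1$, whence $|a_{jk}|\le\|\mathbf{A}_{j-k}\|\le C'\rho^{|j-k|}$, i.e.\ $\mathbf{A}\in\mathscr{B}_\gamma$. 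The operator-valued Cauchy estimate needed here follows from the scalar one applied to $\langle f_{\mathbf{A}}(z)x,y\rangle$ together with the uniform boundedness principle.

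Items (5) and (6) are the operator-valued version of the classical equivalence ``$f\in C(\mathbb{T})$ with one-sided Fourier spectrum $\Leftrightarrow$ $f$ is the boundary function of a disc-algebra element.'' If $\mathbf{A}\in\mathscr{L}\cap\mathscr{B}_c$, then its diagonals $\mathbf{A}_k$ vanish for $k<0$, so $g(z):=\sum_{k\ge0}z^k\mathbf{A}_k$ converges on $\mathbb{D}$ (using $\|\mathbf{A}_k\|\le\|\mathbf{A}\|$ and $\sum_{k\ge0}|z|^k<\infty$), is holomorphic there as a locally uniform limit of operator polynomials, and its Poisson means $g(r\theta)=\sum_k r^{|k|}\theta^k\mathbf{A}_k$ converge uniformly to $f_{\mathbf{A}}(\theta)$ as $r\to1^-$ (approximate-identity argument again), so $g$ extends continuously to $\overline{\mathbb{D}}$ with boundary values $f_{\mathbf{A}}$. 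Conversely, such an extension forces $\mathbf{A}_k=\frac{1}{2\pi}\int_{\mathbb{T}}\theta^{-k}f_{\mathbf{A}}(\theta)\,|d\theta|=0$ for $k<0$ by Cauchy's theorem for vector-valued holomorphic functions, and continuity of the boundary values gives $\mathbf{A}\in\mathscr{B}_c$; item (6) is obtained by the substitution $z\mapsto1/z$, which swaps $\mathbb{D}$ with $\{|z|>1\}$ and $\mathscr{L}$ with $\mathscr{L}^*$. Finally, (7) is a direct entrywise computation: products of triangular matrices are triangular, and for $\mathbf{A}\in\mathscr{L}$, $\mathbf{B}\in\mathscr{L}_0$ the diagonal entry $(\mathbf{AB})_{ii}=\sum_k a_{ik}b_{ki}$ reduces to its $k=i$ term $a_{ii}b_{ii}=0$, and symmetrically for $\mathbf{BA}$, so $\mathscr{L}_0$ is a two-sided ideal in $\mathscr{L}$ and $\mathscr{L}_0^*$ one in $\mathscr{L}^*$. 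The only genuine work is the transfer of the Fej\'er/Poisson approximate-identity arguments and of Cauchy's theorem and estimates to the $\mathscr{B}(\ell^2)$-valued setting---which all go through because only the triangle inequality, the positivity and concentration of the kernels, and weak testing against $\ell^2$ vectors are used---and the bookkeeping of which sign of the Fourier spectrum corresponds to $\mathscr{L}$ versus $\mathscr{L}^*$.
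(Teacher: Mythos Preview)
The paper does not actually prove this proposition: it is presented as a collection of known facts with citations to \cite{BK11,BK05,GK10,GrKl12}. Your proposal therefore supplies what the paper deliberately omits, and the arguments you give---Fej\'er means for (1), Cauchy estimates on annuli for (3), the Poisson-kernel/disc-algebra argument for (5)--(6), and direct computation for (2), (4), (7)---are the standard ones and are correct. One small remark: in (2) the phrase ``entire function'' in the paper is slightly loose (a Laurent polynomial with negative powers has a pole at $0$), and you handle this correctly by noting that $z^N f_{\mathbf{A}}(z)$ is entire; alternatively one may read the statement as ``holomorphic on $\mathbb{C}\setminus\{0\}$.'' Otherwise there is nothing to add.
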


By $\mathscr{A}$ we shall denote some Banach algebra of matrices in $\mathscr{B}(\ell^2)$. Typically we shall 
assume that 
\begin{equation}\label{incl}
\mathscr{B}_{\gamma}(\ell^2)\subset \mathscr{A}\subseteq  \mathscr{B}_c(\ell^2)\subset \mathscr{B}(\ell^2)
\end{equation} 
and
\begin{equation}\label{dominorm}
\|\mbf{A}\|_{\mathscr{B}(\ell^2)} \le c\|\mbf{A}\|_{\mathscr{A}},
\end{equation}
where the constant $c$ is independent of $\mbf{A}$. These conditions are satisfied for the Banach Algebras
of Definition~\ref{algebras}. Indeed, for condition~(\ref{incl}) this is obvious and for condition~\eqref{dominorm}
see~\cite{GRS10}.

\begin{defn}
We say that an algebra $\mathscr{A}$ is \emph{strongly decomposable} if there exists a bounded projection $\mathcal{P}:\mathscr{A}\to\mathscr{A}$ which maps
$\mathscr{A}$ onto $\mathscr{L}\cap\mathscr{A}$ parallel to $\mathscr{L}_0^*$. In this case, we let $\mathcal{Q} = I-\mathcal{P}\in B(\mathscr{A})$ be the projection onto  $\mathscr{L}_0^*\cap\mathscr{A}$ parallel to $\mathscr{L}$. 
\end{defn}
\begin{rmk}
It is easy to check that the Banach algebras
of Definition~\ref{algebras} are strongly decomposable. In most cases,
$\mathscr{B}\subseteq \mathscr{C}_1$ will imply strong decomposability.
\end{rmk}


In order to prove the main result in Section \ref{s:algebraic}, we need to embed the algebra $\mathscr{A}$ into the algebra of (uniformly) continuous operator valued functions
$C(\mathbb{T}, \mathscr{B}(\ell^2))$.  The above  definitions transfer to the realm of such functions in the following way.

\begin{defn}\cite{GL72}.
Let $\mathfrak{A}$ be a subalgebra of $C(\mathbb{T}, \mathscr{B}(\ell^2))$. Let $\mathfrak{A}^+$ be the set of all functions in 
$\mathfrak{A}$ that admit holomorphic extensions into $\mathbb{D}$ that are continuous in $\overline{\mathbb{D}}$. Similarly, let $\mathfrak{A}^-$ be the set of all functions in 
$\mathfrak{A}$ that admit bounded holomorphic extensions into $\mathbb{C}\backslash\overline{\mathbb{D}}$ that are continuous in $\mathbb{C}\backslash{\mathbb{D}}$. Let also 
$\mathfrak{A}^-_0 = \{f\in\mathfrak{A}^-: \ f(\infty) = 0\}.$ The algebra $\mathfrak{A}$ is \emph{decomposing}
if $\mathfrak{A} = \mathfrak{A}^+ \oplus \mathfrak{A}^-_0$. We shall denote by $\mathfrak{P}$ and $\mathfrak{Q}$ the projections in $B(\mathfrak{A})$ associated with this decomposition.
\end{defn}

\begin{defn}\cite{GL72}.
A function $f\in\mathfrak{A}$ admits a \emph{canonical factorization} if $f = f_\ell f_u$, where $f_\ell, f^{-1}_\ell \in\mathfrak{A}^+$ and $f_u,f^{-1}_u \in\mathfrak{A}^-$.
\end{defn}

\begin{rmk}\label{embedrmk}
We observe that
an algebra $\mathfrak{A}$ is  inverse closed in $C(\mathbb{T}, \mathscr{B}(\ell^2))$ if and only if the function $f^{-1}$ defined by $f^{-1}(\theta) = [f(\theta)]^{-1}$ is in $\mathfrak{A}$ whenever
$f\in \mathfrak{A}$ and $[f(\theta)]^{-1}\in \mathscr{B}(\ell^2)$ for all $\theta\in \mathbb{T}$. In particular, a subalgebra $\mathfrak A_{\mathscr{A}} = \{f_{\mbf{A}}$: $\mbf{A}\in \mathscr{A}\}$ is inverse closed in $C(\mathbb{T}, \mathscr{B}(\ell^2))$ if  $f_{\mbf{A}}$ is defined by
\eqref{algincl} and $\mathscr{A}$ is inverse closed in $\mathscr{B}_c(\ell^2)$.
We also note that the algebra $\mathfrak A_{\mathscr{A}}$ is decomposing if $\mathscr{A}$ is strongly
decomposable.
\end{rmk}

\section{Abstract Harmonic Analysis Approach}
\label{s:algebraic}

In this section we present our first theorem concerning the off-diagonal decay of the LU-factors.
Our abstract approach, which is based on some advanced harmonic analysis results, leads to a fairly short 
proof, albeit at the cost of more conceptual effort.

\begin{thm}
\label{mainthm1}
Let $\mathscr{A}$ be a strongly decomposable inverse closed subalgebra of $\mathscr{B}(\ell^2)$ that satisfies
conditions~\eqref{incl}  and \eqref{dominorm}. Assume also that $\mbf{A}\in \mathscr{A}$ admits an 
LU-factorization $\mbf{A}=\mbf{LU}$ in $\mathscr{B}_c(\ell^2)$. Then $\mbf{A}$ admits an 
LU-factorization in $\mathscr{A}$.
\end{thm}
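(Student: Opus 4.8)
The plan is to transfer the problem from the algebra $\mathscr{A}$ of bi-infinite matrices to the algebra $\mathfrak{A}_{\mathscr{A}} = \{f_{\mbf{A}} : \mbf{A}\in\mathscr{A}\}$ of operator-valued functions on $\mathbb{T}$ introduced in~\eqref{algincl}, and there invoke a Gohberg--Leiterer-type factorization theorem. First I would record the structural facts established in the preliminaries: since $\mathscr{A}$ is inverse closed in $\mathscr{B}_c(\ell^2)$, Remark~\ref{embedrmk} gives that $\mathfrak{A}_{\mathscr{A}}$ is inverse closed in $C(\mathbb{T},\mathscr{B}(\ell^2))$; and since $\mathscr{A}$ is strongly decomposable, the same remark gives that $\mathfrak{A}_{\mathscr{A}}$ is decomposing, i.e.\ $\mathfrak{A}_{\mathscr{A}} = \mathfrak{A}_{\mathscr{A}}^+ \oplus (\mathfrak{A}_{\mathscr{A}}^-)_0$. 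The map $\mbf{A}\mapsto f_{\mbf{A}}$ is an isometric (or at least bounded, bicontinuous) algebra isomorphism onto its image, carrying $\mathscr{L}\cap\mathscr{A}$ to $\mathfrak{A}_{\mathscr{A}}^+$ and $\mathscr{L}^*\cap\mathscr{A}$ to $\mathfrak{A}_{\mathscr{A}}^-$ (this is the content of Proposition~\ref{propprop}, items 5 and 6, combined with~\eqref{algincl}); so an LU-factorization of $\mbf{A}$ in $\mathscr{A}$ is \emph{equivalent} to a canonical factorization of $f_{\mbf{A}}$ in $\mathfrak{A}_{\mathscr{A}}$.

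Next I would use the hypothesis that $\mbf{A}=\mbf{LU}$ in $\mathscr{B}_c(\ell^2)$ to see that $f_{\mbf{A}} = f_{\mbf{L}} f_{\mbf{U}}$ is a canonical factorization in $\mathfrak{A}_{\mathscr{B}_c}$: indeed $\mbf{L},\mbf{L}^{-1}\in\mathscr{L}\cap\mathscr{B}_c$ and $\mbf{U},\mbf{U}^{-1}\in\mathscr{L}^*\cap\mathscr{B}_c$, so $f_{\mbf{L}},f_{\mbf{L}}^{-1}\in\mathfrak{A}_{\mathscr{B}_c}^+$ and $f_{\mbf{U}},f_{\mbf{U}}^{-1}\in\mathfrak{A}_{\mathscr{B}_c}^-$. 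In particular $f_{\mbf{A}}(\theta)$ is invertible in $\mathscr{B}(\ell^2)$ for every $\theta\in\mathbb{T}$, and $f_{\mbf{A}}$ already admits \emph{some} canonical factorization in the bigger algebra. The substance of the theorem is then a perturbation/persistence statement: if a function in the smaller algebra $\mathfrak{A}_{\mathscr{A}}$ admits a canonical factorization in the larger decomposing algebra $\mathfrak{A}_{\mathscr{B}_c}$, then it already admits one in $\mathfrak{A}_{\mathscr{A}}$. This is exactly the kind of result proved by Gohberg--Leiterer~\cite{GL72} for decomposing subalgebras, and I would cite (or reprove) their theorem: the key mechanism is a successive-approximation / Newton-type scheme in which one writes $f_{\mbf{A}} = (I + g)$ after normalizing by the known factors, splits $g = \mathfrak{P}g + \mathfrak{Q}g$ using the decomposition projections, and iterates, with convergence controlled by the norm bound~\eqref{dominorm} together with inverse-closedness (to keep the intermediate inverses inside $\mathscr{A}$) and the GRS-type spectral-radius control hidden in the hypothesis that $\mathscr{A}$ is inverse closed. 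Once the canonical factorization $f_{\mbf{A}} = f_\ell f_u$ is obtained in $\mathfrak{A}_{\mathscr{A}}$, I would pull it back: $f_\ell = f_{\tilde{\mbf{L}}}$, $f_u = f_{\tilde{\mbf{U}}}$ for some $\tilde{\mbf{L}}\in\mathscr{L}\cap\mathscr{A}$, $\tilde{\mbf{U}}\in\mathscr{L}^*\cap\mathscr{A}$ with inverses in the same spaces, and $\mbf{A} = \tilde{\mbf{L}}\tilde{\mbf{U}}$.

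The main obstacle I anticipate is establishing the persistence of canonical factorization from $\mathfrak{A}_{\mathscr{B}_c}$ down to the subalgebra $\mathfrak{A}_{\mathscr{A}}$ — i.e.\ making the Gohberg--Leiterer iteration converge \emph{inside} $\mathscr{A}$ rather than merely in operator norm. Two points need care. First, one must know that the factors $f_\ell, f_u$ produced by the iteration have inverses in $\mathfrak{A}_{\mathscr{A}}^{\pm}$, not just in $\mathfrak{A}_{\mathscr{B}_c}^{\pm}$; this is where inverse-closedness of $\mathscr{A}$ (and of the half-line subalgebras $\mathscr{L}\cap\mathscr{A}$, $\mathscr{L}^*\cap\mathscr{A}$ — which should follow from strong decomposability plus inverse-closedness of $\mathscr{A}$) is essential. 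Second, uniqueness of the canonical factorization up to a constant diagonal factor (cf.\ Remark~\ref{uniqrem}) is needed to identify the $\mathfrak{A}_{\mathscr{A}}$-factorization with the given $\mathscr{B}_c$-factorization and thus conclude $\mbf{A}=\tilde{\mbf{L}}\tilde{\mbf{U}}$ with the \emph{same} $\mbf{A}$; here one observes that $f_{\tilde{\mbf{L}}}^{-1} f_{\mbf{L}} = f_{\mbf{U}} f_{\tilde{\mbf{U}}}^{-1}$ lies in $\mathfrak{A}_{\mathscr{B}_c}^+ \cap \mathfrak{A}_{\mathscr{B}_c}^-$, hence is a constant (a diagonal matrix), which can be absorbed. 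I would also flag the minor bookkeeping that the isomorphism $\mbf{A}\mapsto f_{\mbf{A}}$ need only be a topological isomorphism onto $\mathfrak{A}_{\mathscr{A}}$ for the argument to work, so no metric rigidity is required.
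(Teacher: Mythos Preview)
Your proposal correctly identifies the overall framework---embed $\mathscr{A}$ into $C(\mathbb{T},\mathscr{B}(\ell^2))$ via $\mbf{A}\mapsto f_{\mbf{A}}$ and invoke Gohberg--Leiterer---and this is indeed the paper's route. But there is a genuine gap at precisely the step you flag as ``the main obstacle.'' The Gohberg--Leiterer result quoted in the paper (Theorem~\ref{GL72thm}) is a \emph{near-identity} statement: it yields a canonical factorization in $\mathfrak{A}_{\mathscr{A}}$ only under the hypothesis $\max_{\theta}\|f(\theta)-I\|_{\mathscr{B}(\ell^2)}<1$. It is not a general ``persistence of factorization from the big algebra to the subalgebra'' theorem, and you do not explain how to reduce to the near-identity case. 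Your phrase ``normalizing by the known factors'' cannot work as stated: the known $\mbf{L},\mbf{U}$ lie only in $\mathscr{B}_c$, so preconditioning by $\mbf{L}^{-1},\mbf{U}^{-1}$ yields $\mbf{I}$ trivially, and undoing the preconditioning would require multiplying by matrices not yet known to be in $\mathscr{A}$---which is the very thing to be proved.

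The paper resolves this by preconditioning instead with $[f_{\mbf{L}}(\varepsilon)]^{-1}$ and $[f_{\mbf{U}}(1/\varepsilon)]^{-1}$ for a carefully chosen $\varepsilon\in(0,1)$. Since $f_{\mbf{L}}$ and $f_{\mbf{U}}$ extend holomorphically into $\mathbb{D}$ and $\mathbb{C}\setminus\overline{\mathbb{D}}$ respectively (Proposition~\ref{propprop}), the evaluations $f_{\mbf{L}}(\varepsilon)$ and $f_{\mbf{U}}(1/\varepsilon)$ lie in $\mathscr{B}_\gamma\subset\mathscr{A}$ by condition~\eqref{incl}; by continuity one can pick $\varepsilon$ so that $\mbf{A}':=[f_{\mbf{L}}(\varepsilon)]^{-1}\mbf{A}[f_{\mbf{U}}(1/\varepsilon)]^{-1}$ satisfies $\|\mbf{A}'-\mbf{I}\|_{\mathscr{B}(\ell^2)}<1$, and Theorem~\ref{GLdec} then gives $\mbf{A}'=\mbf{L}'\mbf{U}'$ in $\mathscr{A}$. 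A second nontrivial ingredient you do not mention is Theorem~\ref{holex} (from~\cite{BK05}), which guarantees $[f_{\mbf{L}}(\varepsilon)]^{-1}\in\mathscr{L}$ and $[f_{\mbf{U}}(1/\varepsilon)]^{-1}\in\mathscr{L}^*$; this is what makes $([f_{\mbf{L}}(\varepsilon)]^{-1}\mbf{L})(\mbf{U}[f_{\mbf{U}}(1/\varepsilon)]^{-1})$ a second LU-factorization of $\mbf{A}'$ in $\mathscr{B}_c$, so that uniqueness (Remark~\ref{uniqrem}) matches it with $\mbf{L}'\mbf{U}'$ up to a diagonal, after which one multiplies back by the preconditioners (now known to be in $\mathscr{A}$) to recover $\mbf{L},\mbf{U}\in\mathscr{A}$. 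Incidentally, this route also sidesteps your worry about inverse-closedness of the half-line subalgebras $\mathscr{L}\cap\mathscr{A}$: triangularity of the relevant inverses is supplied by Theorem~\ref{holex}, not by any such inverse-closedness.
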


The proof of this result relies heavily on the following two abstract results.

\begin{thm}\label{holex}\cite[Theorem 8.14]{BK05}.
Let $\mbf{A}\in \mathscr{L} \cap \mathscr{B}_c(\ell^2)$. The following are equivalent:
\begin{enumerate}
\item $\mbf{A}^{-1}\in \mathscr{L}$;
\item  $f_{\mbf{A}}(z)$ is invertible in
$\mathscr{B}(\ell^2)$ for all $z\in\overline{\mathbb D}$.
\end{enumerate}
\end{thm}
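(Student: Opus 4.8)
\textbf{Proof plan for Theorem~\ref{holex}.}

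The plan is to prove the equivalence of the two conditions for $\mbf{A}\in \mathscr{L} \cap \mathscr{B}_c(\ell^2)$. The implication $(1)\Rightarrow(2)$ is the more elementary direction, so I would handle it first. Assuming $\mbf{A}^{-1}\in \mathscr{L}$, note that $\mbf{A}^{-1}\in \mathscr{B}(\ell^2)$ as well, hence $\mbf{A}^{-1}\in \mathscr{L}\cap\mathscr{B}_c(\ell^2)$ once we confirm band-dominatedness of the inverse (this holds because $\mathscr{B}_c$ is inverse-closed, a consequence of item~3 of the inverse-closedness theorem applied with $v\equiv 1$, since $\mathscr{C}_1\subset\mathscr{B}_c$ and more directly $\mathscr{B}_c$ is a $C^*$-algebra, so inverse-closedness is automatic). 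By Proposition~\ref{propprop}, item~5, membership in $\mathscr{L}\cap\mathscr{B}_c(\ell^2)$ is equivalent to $f_{\mbf{A}}$ admitting a holomorphic extension into $\mathbb{D}$ continuous on $\overline{\mathbb{D}}$. The key observation is that $f_{(\cdot)}$ is an algebra homomorphism: a direct computation gives $f_{\mbf{AB}}(\theta) = M(\theta)\mbf{AB}M(\theta^{-1}) = M(\theta)\mbf{A}M(\theta^{-1})M(\theta)\mbf{B}M(\theta^{-1}) = f_{\mbf{A}}(\theta)f_{\mbf{B}}(\theta)$, and $f_{\mbf{I}}(\theta)=\mbf{I}$. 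Applying this to $\mbf{A}\mbf{A}^{-1}=\mbf{I}$ yields $f_{\mbf{A}}(\theta)f_{\mbf{A}^{-1}}(\theta)=\mbf{I}$ for all $\theta\in\mathbb{T}$, so $f_{\mbf{A}}(\theta)$ is invertible with inverse $f_{\mbf{A}^{-1}}(\theta)$. Since both $f_{\mbf{A}}$ and $f_{\mbf{A}^{-1}}$ extend holomorphically into $\mathbb{D}$ and are continuous on $\overline{\mathbb{D}}$, the identity $f_{\mbf{A}}(z)f_{\mbf{A}^{-1}}(z)=\mbf{I}$ propagates from $\mathbb{T}$ into all of $\overline{\mathbb{D}}$ by the uniqueness of holomorphic extensions (both sides are holomorphic in $\mathbb{D}$ and agree on the boundary). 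This gives invertibility of $f_{\mbf{A}}(z)$ for every $z\in\overline{\mathbb{D}}$, which is exactly condition~(2).

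For the harder direction $(2)\Rightarrow(1)$, I would argue as follows. Assume $f_{\mbf{A}}(z)$ is invertible in $\mathscr{B}(\ell^2)$ for all $z\in\overline{\mathbb{D}}$. Since $\mbf{A}\in\mathscr{L}\cap\mathscr{B}_c$, Proposition~\ref{propprop}(5) tells us $f_{\mbf{A}}$ is holomorphic on $\mathbb{D}$ and continuous on $\overline{\mathbb{D}}$, taking values in $\mathscr{B}(\ell^2)$. The goal is to produce a genuine operator inverse $\mbf{A}^{-1}$ lying in $\mathscr{L}$. The natural candidate is the operator $\mbf{B}$ whose symbol is $f_{\mbf{B}}(\theta) := [f_{\mbf{A}}(\theta)]^{-1}$; I must show (i) that $\theta\mapsto [f_{\mbf{A}}(\theta)]^{-1}$ is itself an element of $\mathfrak{A}^+$, i.e.\ extends holomorphically into $\mathbb{D}$ and continuously to $\overline{\mathbb{D}}$, and (ii) that the resulting symbol actually corresponds to a bounded operator $\mbf{B}$ via the inverse of the map $\mbf{A}\mapsto f_{\mbf{A}}$. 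Step~(i) is the crux: the map $z\mapsto [f_{\mbf{A}}(z)]^{-1}$ is well-defined and continuous on the compact set $\overline{\mathbb{D}}$ because inversion is continuous on the invertibles of $\mathscr{B}(\ell^2)$ and $f_{\mbf{A}}$ is continuous with everywhere-invertible values on the compact $\overline{\mathbb{D}}$; holomorphy of $z\mapsto[f_{\mbf{A}}(z)]^{-1}$ on the open disc $\mathbb{D}$ follows from holomorphy of $f_{\mbf{A}}$ together with the fact that operator inversion is holomorphic (analytic) on the open set of invertible operators. Hence $[f_{\mbf{A}}(\cdot)]^{-1}\in \mathfrak{A}^+$ in the sense of the decomposing-algebra framework.

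Having established that the candidate symbol lies in the ``$+$'' part, I would recover the operator. Continuity of $f_{\mbf{B}}$ on $\mathbb{T}$ means its Fourier coefficients $\mbf{B}_k$ (the diagonals) are well-defined, and the holomorphic extension into $\mathbb{D}$ forces $\mbf{B}_k=\mbf{0}$ for $k<0$, so $\mbf{B}$ is lower triangular, $\mbf{B}\in\mathscr{L}$; since $f_{\mbf{B}}$ is continuous on $\mathbb{T}$ with values in $\mathscr{B}(\ell^2)$ we also get $\mbf{B}\in\mathscr{B}_c$, so $\mbf{B}\in\mathscr{L}\cap\mathscr{B}_c$ by Proposition~\ref{propprop}(5) read in reverse. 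Finally, because $f_{(\cdot)}$ is an injective homomorphism intertwining operator products with pointwise products of symbols, the identity $f_{\mbf{A}}(\theta)f_{\mbf{B}}(\theta)=\mbf{I}=f_{\mbf{B}}(\theta)f_{\mbf{A}}(\theta)$ for all $\theta\in\mathbb{T}$ translates back to $\mbf{A}\mbf{B}=\mbf{B}\mbf{A}=\mbf{I}$, giving $\mbf{A}^{-1}=\mbf{B}\in\mathscr{L}$, which is condition~(1). The main obstacle, and the point demanding the most care, is step~(i): justifying that pointwise invertibility of the operator-valued symbol on $\overline{\mathbb{D}}$ promotes to a \emph{holomorphic} and boundary-continuous operator-valued inverse, and that this symbol is realized by an honest band-dominated operator. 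This is precisely where the hypothesis $\mbf{A}\in\mathscr{B}_c$ (rather than merely $\mathscr{B}(\ell^2)$) and the continuity of $f_{\mbf{A}}$ are indispensable, and it is the reason the result is attributed to the machinery of~\cite{BK05} rather than being elementary.
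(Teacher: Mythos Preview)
The paper does not supply its own proof of this theorem: it is quoted verbatim as \cite[Theorem~8.14]{BK05} and used as a black box in the proof of Theorem~\ref{mainthm1}. So there is nothing in the paper to compare your argument against; I can only assess your proposal on its own merits.

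Your direction $(1)\Rightarrow(2)$ is fine. For $(2)\Rightarrow(1)$, your analytic step~(i) is correct, but the ``recovery'' step~(ii) has a gap as written. You define $f_{\mbf{B}}(\theta):=[f_{\mbf{A}}(\theta)]^{-1}$ and then speak of its Fourier coefficients $\mbf{B}_k$ as diagonals of ``the operator $\mbf{B}$'', invoking Proposition~\ref{propprop}(5) in reverse. But that proposition characterizes which operators in $\mathscr{B}_c$ lie in $\mathscr{L}$; it does not assert that every function in $C(\mathbb{T},\mathscr{B}(\ell^2))$ with a holomorphic extension into $\mathbb{D}$ arises as $f_{\mbf{B}}$ for some bounded $\mbf{B}$. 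The map $\mbf{A}\mapsto f_{\mbf{A}}$ is not surjective onto $C(\mathbb{T},\mathscr{B}(\ell^2))$, so you cannot simply ``read off'' $\mbf{B}$ from the symbol without further justification.

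The fix is short and you have all the ingredients at hand. Since $M(1)=\mbf{I}$, one has $f_{\mbf{A}}(1)=\mbf{A}$, so hypothesis~(2) applied at $z=1$ already gives that $\mbf{A}$ is invertible in $\mathscr{B}(\ell^2)$. Set $\mbf{B}:=\mbf{A}^{-1}$; then for $\theta\in\mathbb{T}$,
\[
f_{\mbf{B}}(\theta)=M(\theta)\mbf{A}^{-1}M(\theta^{-1})=\bigl(M(\theta)\mbf{A}M(\theta^{-1})\bigr)^{-1}=[f_{\mbf{A}}(\theta)]^{-1},
\]
so your candidate symbol really is $f_{\mbf{A}^{-1}}$. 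Inverse-closedness of the $C^\ast$-subalgebra $\mathscr{B}_c$ gives $\mbf{A}^{-1}\in\mathscr{B}_c$, and your step~(i) furnishes the holomorphic extension of $f_{\mbf{A}^{-1}}$ into $\mathbb{D}$, continuous on $\overline{\mathbb{D}}$. Now Proposition~\ref{propprop}(5) applies legitimately and yields $\mbf{A}^{-1}\in\mathscr{L}\cap\mathscr{B}_c$. With this one-line observation inserted, your argument is complete.
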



\begin{thm}\label{GL72thm}\cite[Theorem 1.1]{GL72}.
Assume that $\mathfrak{A}$ is a decomposing inverse closed subalgebra of $C(\mathbb{T}, \mathscr{B}(\ell^2))$ that satisfies the following two conditions:
\begin{enumerate}
\item For all $f \in\mathfrak{A}$
$$\max_{\theta \in\mathbb{T}}  \|f(\theta)\|_{\mathscr{B}(\ell^2)} \le c\|f\|_{\mathfrak{A}},$$
where $c > 0$ is a constant independent of $f$;
\item If $f \in C(\mathbb{T}, \mathscr{B}(\ell^2))$ admits a holomorphic extension to an annulus
$\{1-\varepsilon < |z| < 1+\varepsilon\}$ for some $\varepsilon > 0$ then $f\in\mathfrak{A}$ and the set of all such operator-valued functions is dense in $\mathfrak{A}$.
\end{enumerate}

Then any function $f \in\mathfrak{A}$ that satisfies
\begin{equation}
\max_{\theta\in\mathbb{T}} \|f(\theta) - I\|_{\mathscr{B}(\ell^2)} < 1
\end{equation}
admits a canonical factorization $f = f_\ell f_u$ and
\begin{equation}
f_\ell^{-1}(\theta) = f_\ell(\theta)^{-1} = I - \mathfrak P g(\theta) + \mathfrak P[g(\theta)\mathfrak P g(\theta)] -\ldots,
\end{equation}
\begin{equation}
f_u^{-1}(\theta) = f_u(\theta)^{-1} = I - \mathfrak Q g(\theta) + \mathfrak Q[(\mathfrak Q g(\theta))g(\theta)] -\ldots,
\end{equation}
where $g(\theta) = f(\theta) - I$, and the series converge in $\mathfrak{A}$.
\end{thm}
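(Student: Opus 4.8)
The plan is to recognize the two displayed series as the Neumann series of a pair of fixed-point equations and to reduce the whole statement to inverting two bounded operators on $\mathfrak{A}$. Write $f = I + g$ and set $q := \max_{\theta\in\mathbb{T}}\|g(\theta)\|_{\mathscr{B}(\ell^2)} < 1$. Reading the series off term by term, $x := f_\ell^{-1}$ should satisfy $x = I - \mathfrak{P}(g x)$ and $y := f_u^{-1}$ should satisfy $y = I - \mathfrak{Q}(y g)$: the iteration $x^{(0)} = I$, $x^{(n)} = I - \mathfrak{P}\bigl(g\,x^{(n-1)}\bigr)$ reproduces $I - \mathfrak{P} g + \mathfrak{P}[g\,\mathfrak{P} g] - \cdots$, and the mirror iteration reproduces the series for $y$. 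Introducing $\mathcal{L},\mathcal{R} \in B(\mathfrak{A})$ by $\mathcal{L} h = \mathfrak{P}(g h)$ and $\mathcal{R} h = \mathfrak{Q}(h g)$, the equations read $(I+\mathcal{L})x = I$ and $(I+\mathcal{R})y = I$. Since $I$ is constant and $\mathfrak{P}$, $\mathfrak{Q}$ take values in $\mathfrak{A}^+$ and $\mathfrak{A}^-_0$, any solution automatically obeys $x\in\mathfrak{A}^+$ and $y\in\mathfrak{A}^-$; thus everything reduces to the invertibility of $I+\mathcal{L}$ and $I+\mathcal{R}$, followed by the identification $f = f_\ell f_u$.

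The convergence of the two series — equivalently, that the spectral radii of $\mathcal{L}$ and $\mathcal{R}$ are strictly below $1$ — is the main obstacle, and it cannot be reached by a soft contraction argument. The naive bound $\|\mathcal{L} h\|_{\mathfrak{A}}\le\|\mathfrak{P}\|_{B(\mathfrak{A})}\,\|g\|_{\mathfrak{A}}\,\|h\|_{\mathfrak{A}}$ forces the far stronger hypothesis $\|\mathfrak{P}\|\,\|g\|_{\mathfrak{A}}<1$; indeed, since no submultiplicative algebra norm dominating the pointwise norm can be comparable to it, the hypothesis $q<1$ simply does not make $\mathcal{L}$ a contraction in $\|\cdot\|_{\mathfrak{A}}$. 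I would instead argue in two stages. First, by condition~(2) it suffices to treat symbols $g$ holomorphic in an annulus $\{1-\varepsilon<|z|<1+\varepsilon\}$, a dense class; for these the log-convexity (Hadamard three-circles) of $z\mapsto\|g(z)\|$ lets me pick radii $\rho_-<1<\rho_+$ with $\max_{|z|=\rho_\pm}\|g(z)\|<1$ as well, an estimate the unit-circle bound alone does not supply. Second, I run the homotopy $t\mapsto tg$, $t\in[0,1]$, and show that the set $S$ of parameters for which $I+\mathcal{L}_t$ (with $\mathcal{L}_t h = \mathfrak{P}(tg\,h)$) is invertible is all of $[0,1]$: it contains $0$, it is open because invertibility is an open condition and $t\mapsto\mathcal{L}_t$ is norm-continuous by condition~(1), and it is closed because the uniform pointwise invertibility $\sup_{\theta}\|(I+tg(\theta))^{-1}\|\le(1-q)^{-1}$ yields a uniform a priori bound on the factors that survives the limit. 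Producing that a priori estimate — upgrading uniform pointwise invertibility to a uniform bound in $\|\cdot\|_{\mathfrak{A}}$ — is the genuinely delicate point, and it is exactly where the inverse-closedness of $\mathfrak{A}$ and conditions~(1)–(2) are indispensable; conceptually it is the statement that the spectrum of the abstract Toeplitz operator $\mathcal{L}$ on the Banach space $\mathfrak{A}^+$ agrees with its spectrum on the Hilbert space $H^2(\mathbb{T},\ell^2)$, where $\|\mathcal{L}\|\le q<1$ holds automatically.

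Granting invertibility, I set $x = (I+\mathcal{L})^{-1}I\in\mathfrak{A}^+$, the sum of the first series, and define $f_\ell := x^{-1}$. Applying $\mathfrak{P}$ and $\mathfrak{Q}$ to $f x$ and using $\mathfrak{P}(gx) = I - x$, one finds $\mathfrak{P}(f x) = I$ and $\mathfrak{Q}(f x) = \mathfrak{Q}(g x)\in\mathfrak{A}^-_0$, so $f x = I + \mathfrak{Q}(g x)$ lies in the complementary half-algebra and carries the correct value at the distinguished point. This exhibits the canonical factorization $f = f_\ell f_u$, with $f_u$ the factor so produced; the inverse-closedness built into the setup (Theorem~\ref{holex} on the triangular side) guarantees that $f_\ell^{-1}=x$ and $f_u^{-1}$ remain in $\mathfrak{A}^+$ and $\mathfrak{A}^-$ respectively. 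Finally, the mirror computation with $(I+\mathcal{R})^{-1}I$ identifies $f_u^{-1}=y$ and reproduces the second displayed series, while the uniqueness of the canonical factorization up to a constant reconciles the two constructions. The convergence in $\mathfrak{A}$ of both series then follows from the convergence of the corresponding Neumann series for $(I+\mathcal{L})^{-1}$ and $(I+\mathcal{R})^{-1}$.
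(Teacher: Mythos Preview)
The paper does not prove this theorem: it is quoted verbatim from Gohberg and La\u{\i}terer \cite[Theorem 1.1]{GL72} and then specialized to the matrix setting in Theorem~\ref{GLdec}. So there is no proof in the paper to compare your proposal against.

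Assessing your sketch on its own merits: the reformulation of the two series as Neumann series for $(I+\mathcal{L})^{-1}I$ and $(I+\mathcal{R})^{-1}I$ is correct, and the algebraic verification that $x=(I+\mathcal{L})^{-1}I$ produces a canonical factorization (the computation $\mathfrak{P}(fx)=I$, $\mathfrak{Q}(fx)\in\mathfrak{A}^-_0$) is sound. The genuine gap is exactly where you locate it: you do not actually establish that the spectral radii of $\mathcal{L}$ and $\mathcal{R}$ are below $1$. The Hadamard three-circles remark does not help in the direction you need (it bounds the middle circle by the outer ones, not conversely; simple continuity of $r\mapsto\max_{|z|=r}\|g(z)\|$ already gives $\|g\|<1$ on nearby circles, but that alone does not bound $\mathcal{L}^n$ in the $\mathfrak{A}$-norm). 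The homotopy argument is likewise incomplete: you correctly identify closedness of $S$ as requiring an a priori bound that upgrades pointwise invertibility to invertibility in $\mathfrak{A}$, but you do not supply that bound, and inverse-closedness of $\mathfrak{A}$ gives you membership, not a uniform norm estimate. In short, you have written a clear roadmap and isolated the hard step, but the hard step remains undone; what you have is a reduction, not a proof.
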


The relationships between various subalgebras mentioned in Proposition \ref{propprop} and Remark \ref{embedrmk} allow us to
apply the above result to the algebras of the form
$$ \mathfrak{A}_{\mathscr{A}} = \{f \in C(\mathbb{T}, \mathscr{B}(\ell^2)):f = f_{\mbf{A}}, \mbf{A}\in\mathscr{A}\}.$$ 
As a result, we obtain the following special case of Theorem \ref{GL72thm}.

\begin{thm}\label{GLdec}
Assume that $\mathscr{A}$ is a strongly decomposable inverse closed subalgebra of $\mathscr{B}(\ell^2)$ that satisfies
conditions \eqref{incl} and \eqref{dominorm}. Then any matrix $\mbf{A}\in\mathscr{A}$ that satisfies $\|\mbf{A}-\mbf{I}\|_{\mathscr{B}(\ell^2)} < 1$ 
admits an LU-factorization $\mbf{A} = \mbf{LU}$ in $\mathscr{A}$ such that
\begin{equation}\label{cpart1}
\mbf{L}^{-1} = \mbf{I} - \mathcal{P} \mbf{M} + \mathcal{P}[\mbf{M}\mathcal{P} \mbf{M}] - \mathcal{P}[\mbf{M}\mathcal{P}[\mbf{M}\mathcal{P} \mbf{M}]] +\ldots,
\end{equation}
\begin{equation}\label{acpart1}
\mbf{U}^{-1} = \mbf{I} - \mathcal{Q}  \mbf{M} + \mathcal{Q}[[\mathcal{Q} \mbf{M}] \mbf{M}] - \mathcal{Q}[\mathcal{Q}[[\mathcal{Q} \mbf{M}] \mbf{M}] \mbf{M}] +\ldots,
\end{equation}
where  $\mbf{M} = \mbf{A}- \mbf{I}$ and the series converge in $\mathscr{A}$. 
\end{thm}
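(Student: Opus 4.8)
The plan is to transfer the statement from the matrix algebra $\mathscr{A}$ to the operator-function algebra $\mathfrak{A}_{\mathscr{A}} = \{f_{\mathbf{A}}:\mathbf{A}\in\mathscr{A}\}\subset C(\mathbb{T},\mathscr{B}(\ell^2))$, apply the Gohberg--Leiterer factorization theorem (Theorem~\ref{GL72thm}) there, and translate the resulting canonical factorization of $f_{\mathbf{A}}$ back into an LU-factorization of $\mathbf{A}$. Since $M(\theta^{-1})=M(\theta)^{-1}$, the map $\mathbf{A}\mapsto f_{\mathbf{A}}$ is an algebra homomorphism, and it is injective because $f_{\mathbf{A}}(1)=\mathbf{A}$; it takes values in $C(\mathbb{T},\mathscr{B}(\ell^2))$ precisely because $\mathscr{A}\subseteq\mathscr{B}_c(\ell^2)$ (Definition~\ref{Mcont}). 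Equipping $\mathfrak{A}_{\mathscr{A}}$ with the transported norm $\|f_{\mathbf{A}}\|_{\mathfrak{A}_{\mathscr{A}}}:=\|\mathbf{A}\|_{\mathscr{A}}$ makes it an isometric copy of $\mathscr{A}$. As each $M(\theta)$ is unitary, $f_{\mathbf{A}}(\theta)-I=M(\theta)(\mathbf{A}-\mathbf{I})M(\theta^{-1})$ has operator norm $\|\mathbf{A}-\mathbf{I}\|_{\mathscr{B}(\ell^2)}$ for every $\theta$, so the hypothesis $\|\mathbf{A}-\mathbf{I}\|_{\mathscr{B}(\ell^2)}<1$ is exactly $\max_{\theta\in\mathbb{T}}\|f_{\mathbf{A}}(\theta)-I\|_{\mathscr{B}(\ell^2)}<1$.

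Next I would check that $\mathfrak{A}_{\mathscr{A}}$ meets the hypotheses of Theorem~\ref{GL72thm}. That $\mathfrak{A}_{\mathscr{A}}$ is decomposing and inverse closed in $C(\mathbb{T},\mathscr{B}(\ell^2))$ is recorded in Remark~\ref{embedrmk}, using the strong decomposability and inverse-closedness of $\mathscr{A}$; moreover under the isomorphism the decomposition projections $\mathfrak{P},\mathfrak{Q}$ correspond to $\mathcal{P},\mathcal{Q}$, i.e.\ $\mathfrak{P}f_{\mathbf{B}}=f_{\mathcal{P}\mathbf{B}}$ and $\mathfrak{Q}f_{\mathbf{B}}=f_{\mathcal{Q}\mathbf{B}}$ (the complementary subspaces match by Proposition~\ref{propprop}(5)--(6)). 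Condition~(1) of Theorem~\ref{GL72thm} follows from unitarity of $M(\theta)$ and \eqref{dominorm}: $\max_\theta\|f_{\mathbf{A}}(\theta)\|_{\mathscr{B}(\ell^2)}=\|\mathbf{A}\|_{\mathscr{B}(\ell^2)}\le c\|\mathbf{A}\|_{\mathscr{A}}=c\|f_{\mathbf{A}}\|_{\mathfrak{A}_{\mathscr{A}}}$. For condition~(2), Proposition~\ref{propprop}(3) identifies the operator-valued functions with a holomorphic extension to an annulus about $\mathbb{T}$ with exactly the $f_{\mathbf{B}}$, $\mathbf{B}\in\mathscr{B}_\gamma(\ell^2)$; these lie in $\mathfrak{A}_{\mathscr{A}}$ by \eqref{incl}, so what remains is their density in $\mathfrak{A}_{\mathscr{A}}$, i.e.\ density of $\mathscr{B}_\gamma(\ell^2)$ — equivalently, by Proposition~\ref{propprop}(4), of the band matrices — in $\mathscr{A}$.

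I expect this density check to be the \emph{main obstacle}. For the Gohberg--Baskakov--Sj\"ostrand classes $\mathscr{C}_v$ and the Schur-type classes $\mathscr{A}_v^1$ it is routine, since truncating the off-diagonals of a matrix to a band of width $N$ gives band matrices converging to it in the norm; for the sup-type classes $\mathscr{A}_v,\mathscr{A}_s$ band truncation fails to converge and this step needs additional input (for those classes one may instead invoke the hands-on treatment of Section~\ref{s:schur}). Granting that Theorem~\ref{GL72thm} applies, the rest is bookkeeping: it produces a canonical factorization $f_{\mathbf{A}}=f_\ell f_u$ with $f_\ell,f_\ell^{-1}\in\mathfrak{A}_{\mathscr{A}}^+$ and $f_u,f_u^{-1}\in\mathfrak{A}_{\mathscr{A}}^-$. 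Writing $f_\ell=f_{\mathbf{L}}$, $f_u=f_{\mathbf{U}}$ with $\mathbf{L},\mathbf{U}\in\mathscr{A}$, Proposition~\ref{propprop}(5)--(6) (valid since $\mathscr{A}\subseteq\mathscr{B}_c$) gives $\mathbf{L},\mathbf{L}^{-1}\in\mathscr{L}\cap\mathscr{A}$ and $\mathbf{U},\mathbf{U}^{-1}\in\mathscr{L}^{\ast}\cap\mathscr{A}$, and injectivity together with multiplicativity of $\mathbf{A}\mapsto f_{\mathbf{A}}$ forces $\mathbf{A}=\mathbf{L}\mathbf{U}$, an LU-factorization of $\mathbf{A}$ in $\mathscr{A}$ in the sense of Definition~\ref{def:factorizations}. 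Finally, substituting $g(\theta)=f_{\mathbf{A}}(\theta)-I=f_{\mathbf{M}}(\theta)$, $\mathbf{M}=\mathbf{A}-\mathbf{I}$, into the Gohberg--Leiterer series for $f_\ell^{-1},f_u^{-1}$ and applying $\mathfrak{P}f_{\mathbf{B}}=f_{\mathcal{P}\mathbf{B}}$, $\mathfrak{Q}f_{\mathbf{B}}=f_{\mathcal{Q}\mathbf{B}}$ and multiplicativity termwise, the series turn into \eqref{cpart1} and \eqref{acpart1}, with convergence in $\mathscr{A}$ inherited from convergence in $\mathfrak{A}_{\mathscr{A}}$ via the isometry.
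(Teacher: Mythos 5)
Your proposal is correct and follows essentially the same route as the paper, which obtains Theorem~\ref{GLdec} precisely by transporting $\mathscr{A}$ to $\mathfrak{A}_{\mathscr{A}}$ via $\mathbf{A}\mapsto f_{\mathbf{A}}$ and invoking Theorem~\ref{GL72thm} together with Remark~\ref{embedrmk} and Proposition~\ref{propprop}. The density caveat you flag (condition~(2) of Theorem~\ref{GL72thm} for the sup-type algebras $\mathscr{A}_v,\mathscr{A}_s$) is likewise left implicit in the paper, which states the result under the same hypotheses without verifying that condition in detail.
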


\begin{proof}[Proof of Theorem~\ref{mainthm1}]
Let $\mbf{A} = \mbf{LU}$ and consider the holomorphic extensions $f_{\mbf{L}}(z) = \sum_k z^k\mbf{L}_k$, $z\in{\mathbb D}$, and $f_{\mbf{U}}(z) = \sum_k z^k\mbf{U}_k$, $z\in \mathbb{C}\backslash\overline{\mathbb D}$. Pick $\varepsilon \in (0,1)$ such that
\[\|[f_{\mbf{L}}(\varepsilon)]^{-1}\mbf{LU}[f_{\mbf{U}}(1/\varepsilon)]^{-1}-\mbf{I}\| < 1.\]
Such an $\varepsilon$ exists because $\mbf{L},\mbf{U} \in \mathscr{B}_c(\ell^2)$ and invertibility is stable under small perturbations. Hence, Theorem \ref{GLdec} implies that 
$\mbf{A}': = [f_{\mbf{L}}(\varepsilon)]^{-1}\mbf{LU}[f_{\mbf{U}}(1/\varepsilon)]^{-1}$ admits an LU-factorization 
$\mbf{A}' = \mbf{L}'\mbf{U}'$ in $\mathscr{A}$. On the other hand, since $\mbf{L}^{-1} \in\mathscr{L}$ and
$\mbf{U}^{-1}\in\mathscr{L}^*$, Theorem \ref{holex} implies $[f_{\mbf{L}}(\varepsilon)]^{-1}\in \mathscr{L}$ and
$[f_{\mbf{U}}(1/\varepsilon)]^{-1}\in\mathscr{L}^*$. Hence, $\mbf{A}'$ admits two LU-factorizations in $\mathscr{B}_c(\ell^2)$ and
Remark \ref{uniqrem} implies that there exists an invertible diagonal matrix $\mbf{D}\in\mathscr{D}$ such that
\[(\mbf{L}')^{-1}[f_{\mbf{L}}(\varepsilon)]^{-1}\mbf{L} =  \mbf{U}' f_{\mbf{U}}(1/\varepsilon)\mbf{U}^{-1} = \mbf{D}.\]
Moreover, since $f_{\mbf{L}}(\varepsilon)$, 
$f_{\mbf{U}}(1/\varepsilon)\in \mathscr{B}_{\gamma}(\ell^2)$, 
 we have
$\mbf{L} = f_{\mbf{L}}(\varepsilon) \mbf{L}' \mbf{D} \in \mathscr{A}$ and $\mbf{U} = \mbf{D}^{-1} \mbf{U}' f_{\mbf{U}}(1/\varepsilon)\in\mathscr{A}$.
\end{proof}

\begin{rmk}
From the proofs above it is not hard to see that an analogous result holds in a much more general setting 
than just for matrix algebras. However, this general result is beyond the scope of this paper as it would require
introduction of too much heavy machinery. We cite \cite{BK05, BK11} 
which describe the setup for the general result.
\end{rmk}

\begin{rmk}
If in Theorem~\ref{mainthm1}, we additionally required that $\|\bI - \bA\|_{\mathscr{A}} < 1$, then the result
would follow immediately from the observation that the series in~\eqref{cpart1} and~\eqref{acpart1} would
converge in $\mathscr{A}$ under this additional assumption. However, the condition $\|\bI - \bA\|_{\mathscr{A}} < 1$ is
rather restrictive, as it {\em cannot} be enforced by a simple rescaling of $\bA$,
even not if we assumed $\bA$ to be positive definite. Indeed, it is highly non-trivial that we can
switch from the ${\mathscr A}$-norm to the operator norm in the condition $\|\bI-\bA\| < 1$
in Theorem~\ref{GLdec}. Note furthermore that for a non-positive definite matrix $\bA$ even the condition
$\|\bI-\bA\|_{\Blt} < 1$ cannot be enforced by simple rescaling. Instead we need to resort to
a smart choice of ``preconditioning'' by $[f_{\bL}(\varepsilon)]^{-1}$.  Establishing the existence of such a preconditioner with all 
the right properties involves some advanced results from abstract harmonic analysis.
\end{rmk}

\section{Linear Algebra Approach}
\label{s:schur}

As pointed out in Section~\ref{s:preliminaries}, if the LU-factorization of a matrix exists, then it is unique up to multiplication by an invertible diagonal matrix.
In this section we assume that $\bA$ has an LU-factorization $\bA = \bL \bU$, and to avoid ambiguity we stipulate
that the diagonal entries of $\bL$ are equal to 1.

In what follows it will be useful to write our matrices in block form, i.e.
\[ \mbf{A} = \left( \begin{array}{ll} 
\mbf{A}_{11} & \mbf{A}_{12} \\ 
\mbf{A}_{21} & \mbf{A}_{22} 
\end{array}\right) = 
\left( \begin{array}{ll} 
\mbf{L}_{11} & 0 \\ 
\mbf{L}_{21} & \mbf{L}_{22} 
\end{array}\right)
\left( \begin{array}{ll} 
\mbf{U}_{11} & \mbf{U}_{12} \\ 
0 & \mbf{U}_{22} 
\end{array}\right), \]
with $\bA_{11} = (a_{ij})_{i<0,j<0}$, $\bA_{12} = (a_{ij})_{i<0,j\ge 0}$,
$\bA_{21} = (a_{ij})_{i\ge 0,j<0}$, and $\bA_{22} = (a_{ij})_{i \ge 0,j\ge 0}$.
Analogously,
\[ \mbf{A}^{-1} = \mbf{B} = \left( \begin{array}{ll} 
\mbf{B}_{11} & \mbf{B}_{12} \\ 
\mbf{B}_{21} & \mbf{B}_{22} 
\end{array}\right) = \left( \begin{array}{ll} 
\mbf{\Omega}_{11} & \mbf{\Omega}_{12} \\ 
0 & \mbf{\Omega}_{22} 
\end{array}\right)
\left( \begin{array}{ll} 
\mbf{\Lambda}_{11} & 0 \\ 
\mbf{\Lambda}_{21} & \mbf{\Lambda}_{22} 
\end{array}\right), \]
where each of the blocks is only singly infinite.
It is sometimes convenient to consider the individual blocks of $\bA$ and $\bA^{-1}$
as operators acting on $\ell^2(\Z)$.
Thus (with slight abuse of notation) we can think of, say, $\bA_{11}$ as
$$
\begin{bmatrix}
\bA_{11} & 0 \\
0   & 0
\end{bmatrix}.
$$
We will also make use of the relations 
$\mbf{\Lambda}_{22} = \mbf{U}_{22}\mbf{B}_{22}$, $\mbf{L}_{11} = \mbf{A}_{11}\mbf{\Omega}_{11}$,
$\mbf{\Omega}_{22}^{-1} = \bU_{22}$ and $\mbf{\Lambda}_{22}^{-1} = \bL_{22}$, and note that these
relations imply invertibility of $\bB_{22}$ and $\bA_{11}$.
Furthermore, for a singly-infinite matrix  $\mbf{M} =(m_{jk})_{j,k =-1}^{-\infty}$ and $n \in \mathbb{N}$, we use the notation $\mbf{M}^{(n)}$ to refer to the sub-matrix consisting of those entries $m_{jk}$ such that 
$j,k=-1,\dots,-n$.  We will apply this notation to ``upper-left'' block matrices such as $\bA_{11}$ and $\bB_{11}$. Similarly, for a singly-infinite matrix  $\mbf{M} =(m_{jk})_{j,k =0}^{\infty}$ and $n \in \mathbb{N}$, 
we use the notation $\mbf{M}^{(n)}$ to refer to the sub-matrix consisting of those entries $m_{jk}$ such that  $j,k=0,\dots,n$.  We will apply this notation to ``lower-right' block matrices such as $\bA_{22}$ and $\bB_{22}$.  We use the notation $\mbf{m}_{n-1}$ to refer to the column vector consisting of the entries $m_{k,-n}$ such that $-n < k \leq 0$ and  $\mbf{m}^*_{n-1}$ to refer to the row vector consisting of the entries $m_{n,k}$ such that $0 \leq k < n$.

\begin{thm}\label{main2}
Let $\mathscr{A}$ be one of the algebras $\mathscr{A}_v, \mathscr{A}_{v}^1, \mathscr{C}_v$, introduced
in Definition~\ref{algebras}, and let $\mbf{A} \in \mathscr{A}$.
If $\bA = \bL \bU$ is the LU-factorization of $\bA$ in $\Blt$, then $\mbf{L},\mbf{U} \in \mathscr{A}$.
\end{thm}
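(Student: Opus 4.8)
\textbf{Proof proposal for Theorem~\ref{main2}.}

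The plan is to prove the result by a direct computation on the finite sections of the block decomposition, exploiting the recursive structure of the LU-factorization together with the inverse-closedness of $\mathscr{A}$. Write $\bA = \bL\bU$ with the diagonal of $\bL$ normalized to $1$, and pass to the block form displayed above. The central observation is that the Schur complement machinery expresses the columns $\mbf{l}_{n-1}$ of $\bL_{11}$ (and the rows $\mbf{u}^*_{n-1}$ of $\bU_{11}$), and likewise the off-diagonal blocks $\bL_{21}, \bU_{12}$, in terms of entries of $\bA$ and of the inverses of the finite sections $\bA_{11}^{(n)}$. Concretely, from $\bA = \bL\bU$ one reads off $\bL_{11} = \bA_{11}\bOmega_{11}$ and the analogous identities for the other blocks, where $\bB = \bA^{-1}$ and $\bOmega_{11}$ is the upper-triangular factor of $\bB$. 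So the strategy is: (i) express each entry of $\bL$ and $\bU$ via a Schur-complement/Cramer-type formula involving $\bA$, $\bA^{-1}$, and finite truncations; (ii) use that $\bA \in \mathscr{A}$ and, by inverse-closedness (the Theorem on inverse-closedness of $\mathscr{A}_v$, $\mathscr{A}_v^1$, $\mathscr{C}_v$ quoted in Section~\ref{s:preliminaries}), also $\bA^{-1} = \bB \in \mathscr{A}$; (iii) bound the $\mathscr{A}$-norm of $\bL$ and $\bU$ using the decay of the entries of $\bA$ and $\bB$, plus uniform invertibility of the truncations.

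More precisely, I would first establish the key pointwise formula. Using the relations $\bLambda_{22} = \bU_{22}\bB_{22}$, $\bL_{11} = \bA_{11}\bOmega_{11}$, $\bOmega_{22}^{-1} = \bU_{22}$, $\bLambda_{22}^{-1} = \bL_{22}$ (all listed above), one gets that the $n$-th column of $\bL_{22}$ below the diagonal has the form $\bA^{(n)}$-type expressions acting on columns of $\bA$; iterating the block decomposition down the diagonal, each entry $\ell_{jk}$ with $j > k$ is a ratio of determinants, or more robustly, equals a bilinear expression $\langle \text{(column of } \bA), (\bA_{kk}^{(\cdot)})^{-1} (\text{row of } \bA)\rangle$ where $\bA_{kk}^{(\cdot)}$ is the relevant principal finite section ``below'' index $k$. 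The decay of $\ell_{jk}$ in $|j-k|$ then follows by combining: the decay of the relevant row/column of $\bA$ (which lives in the weighted $\ell^1$ sense because $\bA \in \mathscr{A}$), with the fact that $(\bA_{kk}^{(\cdot)})^{-1}$ has entries inheriting the decay of $\bA^{-1}$ restricted to that block — this is where inverse-closedness plus a uniform bound $\sup_n \|(\bA_{kk}^{(n)})^{-1}\| < \infty$ (which, as noted in Remark~\ref{uniqrem}, is necessary for the LU-factorization to exist) enters. One must argue that the weighted-$\ell^1$ / Schur-type norm of these blocks is uniformly bounded in $k$ and $n$, using submultiplicativity of the admissible weight $v$ to absorb the convolutions that arise from composing ``$\bA$ times section-inverse times $\bA$''.

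The main obstacle will be step (iii): controlling the $\mathscr{A}$-norm \emph{uniformly}, i.e. showing that the contributions from the principal-section inverses $(\bA^{(n)})^{-1}$ do not blow up as the section grows, and that the weighted convolution estimates telescope correctly so that the resulting bound on $\|\bL\|_{\mathscr{A}}$, $\|\bU\|_{\mathscr{A}}$ is finite. The delicate point is that inverse-closedness gives $\bA^{-1} \in \mathscr{A}$ and hence good decay of the \emph{full} infinite inverse, but one needs the decay of the \emph{finite principal submatrix} inverses with constants independent of the truncation size; this requires either a stability/limiting argument ($\bA^{(n)} \to \bA$ in a sense strong enough to pass decay estimates to the limit) or a direct treatment of the singly-infinite corner blocks $\bA_{11}, \bB_{22}$ as genuine operators in $\mathscr{A}$ (which is why the excerpt sets up the convention of viewing blocks as operators on $\ell^2(\Z)$). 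I expect the proof to reduce the problem, via the block formulas, to: \emph{if $\bM \in \mathscr{A}$ is upper/lower triangular and invertible in $\Blt$ with inverse also triangular, then $\bM^{-1} \in \mathscr{A}$} — and then to handle the passage between the two singly-infinite corners and the bi-infinite matrix. Each of $\mathscr{A}_v$, $\mathscr{A}_v^1$, $\mathscr{C}_v$ would then be checked in turn, since the convolution inequalities ($v^{-1}*v^{-1} \le Cv^{-1}$, submultiplicativity, the $\ell^1_v$ structure of $\mathscr{C}_v$) take slightly different forms, but the skeleton of the argument is identical.
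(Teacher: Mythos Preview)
Your overall framework is right --- block-decompose, express entries of $\bL$ via Schur-type recursions, then bound with the weight properties --- and the paper follows exactly this skeleton. But the ``main obstacle'' you identify, namely uniform-in-$n$ control of the finite principal-section inverses $(\bA^{(n)})^{-1}$, is one the paper sidesteps entirely, and your proposal does not contain the device that makes this possible. The paper never needs decay estimates for inverses of finite truncations. Instead, from $\bLambda_{22} = \bU_{22}\bB_{22}$ one gets $\bL_{22} = \bB_{22}^{-1}\bOmega_{22}$, and substituting this into the recursion $\mbf{\lambda}^*_{k-1} = -\ell_{kk}^{-1}\mbf{\ell}^*_{k-1}\bLambda_{22}^{(k-1)}$ yields the identity
\[
\mbf{\ell}^*_{k-1}\,\bLambda_{22}^{(k-1)} \;=\; \mbf{\beta}^*_{k-1}\,\bB_{22}^{(k-1)},
\]
where $\beta$ are the entries of the \emph{full} singly-infinite inverse $\bB_{22}^{-1}$. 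Thus $|\lambda_{kj}|$ is bounded by a bilinear expression in the entries of $\bB_{22}$ and of $\bB_{22}^{-1}$ --- no finite-section inverse appears. Since $\bB = \bA^{-1}\in\mathscr{A}$ by inverse-closedness and the algebras are solid, $\bB_{22}\in\mathscr{A}$; and since the LU-factorization forces $\bB_{22}$ to be invertible in $\Blt$, inverse-closedness gives $\bB_{22}^{-1}\in\mathscr{A}$ as well. The weighted estimates then reduce to a single convolution (sub-convolutivity of $v^{-1}$, submultiplicativity of $v$, or the $\ell^1_v$ algebra structure, according to the case), with no uniformity issue.

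So the route you flag as a fallback --- treating the singly-infinite corner blocks $\bA_{11},\bB_{22}$ as operators in $\mathscr{A}$ and applying inverse-closedness to \emph{them} --- is in fact the entire argument, not an alternative. Your primary route through finite-section inverses would indeed face the uniformity problem you describe, and neither the necessary condition in Remark~\ref{uniqrem} (which is only an operator-norm bound) nor a limiting argument gives you the $\mathscr{A}$-norm control you would need. Drop the finite sections, derive the identity above (and its twin for $\bL_{11}$ via $\bA_{11}$ and $\bA_{11}^{-1}$), and the three cases go through in a few lines each.
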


\if 0 
\begin{rmk}
The assumption that $\mbf{A}_{11}^{(n)}$ and $\mbf{A}_{22}^{(n)}$ are invertible for all $n$ is 
sufficient, but not necessary for the validity of Theorem~\ref{main2}. Indeed, consider the example where
$\bA$ is given by
$$\bA = 
\begin{bmatrix}
\bA_{11} & \bA_{12} \\
\bA_{21} & \bA_{22}
\end{bmatrix}
$$
with
$$
\bA_{11} = 
\begin{bmatrix}
\ddots &   & \vdots & \vdots \\
       & 1 & 0      &   0    \\
\dots  & 0 & 1      &   0    \\
\dots  & 0 & 0      &   1 
\end{bmatrix},
\bA_{12} = 
\begin{bmatrix}
\vdots &  \vdots &  &  \adots \\
   0   & 0 & 0      &           \\
   0   & 0 & 0      &  \dots  \\
   0 & 1   & 0      &  \dots  \\
\end{bmatrix},
\bA_{21} = 
\begin{bmatrix}
\dots  & 0 & 0      &  0  \\
\dots  & 0 & 0      &  1  \\
       & 0 & 0      &  0  \\
\adots &   & \vdots      &  \vdots  \\
\end{bmatrix},
\bA_{22} = 
\begin{bmatrix}
 1 & 1 & 0      &  \dots  \\
 1 & 1 & 0      &  \dots  \\
 0 & 0 & 1      &  \dots  \\
\vdots &   &    &  \ddots  \\
\end{bmatrix}.
$$
It is not difficult to check that $\bA$ has an LU-factorization, where $\bL, \bU,\bL^{-1},\bU^{-1}$ are 2-banded matrices, but $\bA^{(2)}_{22} = \begin{bmatrix}  1 & 1 \\ 1 & 1 \end{bmatrix}$
is not invertible. Furthermore, if $\bA$ is a singly-infinite matrix, say $\bA = (a_{jk})_{j,k=0}^{\infty}$, then the 
existence of an LU-factorization of $\bA$ already implies that $(a_{jk})_{j,k=0}^{n}$ is invertible, in which case the corresponding assumption in Theorem~\ref{main2} could be dropped.
\end{rmk}
\fi

The proof of Theorem~\ref{main2} is quite different from the approach in the previous section, it is more concrete
(and longer)  and implicitly utilizes the Schur complement. 
Before we proceed to the proof of this theorem we need some preparation.

We first prove some estimates on the entries. We shall deal with the lower-right blocks (e.g. $\mbf{\Lambda}_{22}$)  and the upper-left blocks (e.g. $\mbf{L}_{11}$) separately. The following lemma is adapted and expanded from \cite{HallJin09}.
\begin{lem}\label{le:bounds}
1) Let  $\mbf{B}_{22}^{-1} = (\beta_{jk})$. Then
\begin{equation}
\label{lambda_bound}
|\lambda_{kj}| \leq  \left| \mbf{\beta}^*_{k-1}\mbf{B}_{22}^{(k-1)}(j) \right| 
\end{equation}
for $0 \leq j \leq k-1$.\\
2) Let $\mbf{A}_{11}^{-1} = (\alpha_{jk})$. Then
\begin{equation}
\label{l_bound}
|\ell_{jk}| \leq  \left| \mbf{A}_{11}^{(k-1)}\mbf{\alpha}^*_{k-1}(j) \right|
\end{equation}
for $1-k \leq j \leq 0$.
\end{lem}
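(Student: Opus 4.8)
The plan is to prove the two estimates by the same mechanism — 1) for the ``lower-right'' blocks and 2) for the ``upper-left'' blocks being transpose-type mirror images of one another — so I describe 1) and indicate at the end how 2) follows. The guiding observation is that, for each fixed $k$, the finitely many unknowns in $\mbf{\lambda}^*_{k-1} = (\lambda_{k,0},\dots,\lambda_{k,k-1})$ (recall $\lambda_{kk}=1$ and $\lambda_{kj}=0$ for $j>k$, since $\mbf{\Lambda}_{22}$ is unit lower triangular) are determined by a single $k\times k$ linear system coming from the triangularity of the factors.

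First I would assemble this system. Combining the relations $\mbf{\Lambda}_{22} = \bU_{22}\mbf{B}_{22}$, $\mbf{\Omega}_{22}^{-1} = \bU_{22}$, $\mbf{\Lambda}_{22}^{-1} = \bL_{22}$ with the block identity $\mbf{B}_{22} = \mbf{\Omega}_{22}\mbf{\Lambda}_{22}$ (immediate from $\bB = \mbf{\Omega}\mbf{\Lambda}$ and the upper triangularity of $\mbf{\Omega}$) yields the LU-factorization $\mbf{B}_{22}^{-1} = \bL_{22}\bU_{22}$, hence $\mbf{\Lambda}_{22}\mbf{B}_{22}^{-1} = \bL_{22}^{-1}\mbf{B}_{22}^{-1} = \bU_{22}$ is upper triangular. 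Its row-$k$ entries in columns $0,\dots,k-1$ vanish, which reads $\sum_{r=0}^{k}\lambda_{kr}\beta_{rj}=0$ for $0\le j\le k-1$; using $\lambda_{kk}=1$ this is exactly $\mbf{\lambda}^*_{k-1}\,(\beta_{rj})_{r,j=0}^{k-1} = -\,\mbf{\beta}^*_{k-1}$, a square system whose solvability follows from the invertibility of the leading $k\times k$ section of $\mbf{B}_{22}^{-1}$.

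Second, to convert this into the stated bound I would right-multiply by the leading $k\times k$ section $\mbf{B}_{22}^{(k-1)} = (c_{rj})_{r,j=0}^{k-1}$ of $\mbf{B}_{22}$. Because $\sum_{s\ge 0}\beta_{rs}c_{sj}=\delta_{rj}$, we get $(\beta_{rj})_{r,j=0}^{k-1}\,\mbf{B}_{22}^{(k-1)} = \bI - \mbf{E}$, where the ``tail matrix'' $\mbf{E} = \big(\sum_{s\ge k}\beta_{rs}c_{sj}\big)_{r,j=0}^{k-1}$ measures precisely the discrepancy between the leading section of $\mbf{B}_{22}^{-1}$ and the inverse of the leading section of $\mbf{B}_{22}$. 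Feeding this back into the linear system gives $\lambda_{kj} = -\big(\mbf{\beta}^*_{k-1}\mbf{B}_{22}^{(k-1)}\big)(j) + \big(\mbf{\lambda}^*_{k-1}\mbf{E}\big)(j)$, and the estimate $|\lambda_{kj}| \le \big|\mbf{\beta}^*_{k-1}\mbf{B}_{22}^{(k-1)}(j)\big|$ follows once one checks that the correction term $\big(\mbf{\lambda}^*_{k-1}\mbf{E}\big)(j)$ cannot reduce the modulus — i.e.\ that it aligns in sign and size with $-\lambda_{kj}$. An equivalent route is to solve the $k\times k$ system by Cramer's rule and then use Jacobi's identity on complementary minors to rewrite $\lambda_{kj}$ as a ratio of leading-section minors of $\mbf{B}_{22}$ and bound that ratio.

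The hard part is exactly this last step. The leading section $\mbf{B}_{22}^{(k-1)}$ and the inverse $\big((\mbf{B}_{22}^{-1})^{(k-1)}\big)^{-1}$ are related by a Schur complement, not by equality, so the inequality is not a restatement of the linear system; one has to exploit that $\mbf{B}_{22}$ is a genuine corner of the invertible operator $\bA^{-1}$ — in particular the invertibility of finite sections that accompanies the existence of the LU-factorization in $\Blt$ — both to control $\mbf{E}$ and to legitimize the determinant manipulations for bi-infinite matrices. Estimate 2) for $\mbf{L}_{11} = \mbf{A}_{11}\mbf{\Omega}_{11}$ is then obtained by running the same argument with rows and columns, and the direction of triangularity, interchanged, using $\mbf{A}_{11}^{-1}\mbf{L}_{11} = \mbf{\Omega}_{11}$ in place of $\mbf{\Lambda}_{22}\mbf{B}_{22}^{-1} = \bU_{22}$.
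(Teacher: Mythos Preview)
Your approach differs from the paper's: the paper never sets up your linear system. Instead it writes $\mbf{\lambda}^*_{k-1}=-\mbf{\ell}^*_{k-1}\mbf{\Lambda}_{22}^{(k-1)}$ directly from $\mbf{L}_{22}\mbf{\Lambda}_{22}=\bI$, substitutes $\mbf{L}_{22}=\mbf{B}_{22}^{-1}\mbf{\Omega}_{22}$, and asserts that the resulting double sum collapses to $\mbf{\beta}^*_{k-1}\mbf{B}_{22}^{(k-1)}$, i.e.\ it claims an \emph{equality}, not an inequality.

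Your ``hard part,'' however, is a genuine gap that cannot be closed: the correction $(\mbf{\lambda}^*_{k-1}\mbf{E})(j)$ can have either sign. Take $\mbf{L}_{22}$ and $\mbf{U}_{22}$ equal to the identity except on indices $\{0,1\}$, with $\ell_{10}=1$, $u_{01}=-1$, $u_{11}=2$. Then $\beta_{10}=1$, $b_{00}=\tfrac12$, and $\lambda_{10}=-1$, so for $k=1$ one gets $|\lambda_{10}|=1>\tfrac12=|\beta_{10}\,b_{00}|$ and the stated bound fails outright. The same example shows that the paper's collapsing step is also incorrect: for an upper-triangular $\mbf{\Omega}_{22}$ times a lower-triangular $\mbf{\Lambda}_{22}$ the finite sections of the product are \emph{not} products of the finite sections (the order of triangularity is wrong for truncation), so $\sum_{j\le k-1}\omega_{ij}\lambda_{jm}\ne b_{im}$ in general. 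What \emph{is} correct is the exact identity you essentially derived, $\mbf{\lambda}^*_{k-1}=-\mbf{\beta}^*_{k-1}\bigl[(\mbf{B}_{22}^{-1})^{(k-1)}\bigr]^{-1}$; here $\mbf{B}_{22}^{-1}=\mbf{L}_{22}\mbf{U}_{22}$ is lower-times-upper, so its leading sections \emph{do} factor as $\mbf{L}_{22}^{(k-1)}\mbf{U}_{22}^{(k-1)}$ and are invertible. For the downstream theorem one therefore needs uniform-in-$k$ off-diagonal control of $\bigl[(\mbf{B}_{22}^{-1})^{(k-1)}\bigr]^{-1}$ (via norm-controlled inversion of finite sections), not of $\mbf{B}_{22}^{(k-1)}$ itself.
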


\begin{proof}
We first write
\[ \mbf{L}_{22} = \left( \begin{array}{lll} 
\mbf{L}_{22}^{(k-1)} & 0 & 0 \\ 
\mbf{\ell}^*_{k-1} & \ell_{kk} & 0 \\ 
\mbf{M} & \mbf{v} & \mbf{D} \end{array}\right), \]
from which we see
\[ \mbf{L}_{22}^{-1} = \mbf{\Lambda}_{22} = \left( \begin{array}{lll} 
\mbf{\Lambda}_{22}^{(k-1)} & 0 & 0 \\ 
\mbf{\lambda}^*_{k-1} & \lambda_{kk} & 0 \\ 
\mbf{N} & \mbf{w} & \mbf{D}^{-1} 
\end{array}\right). \]
By observing that
$$
\begin{bmatrix}
 \mbf{\ell}^*_{k-1} & \ell_{kk} & 0 
\end{bmatrix}
\begin{bmatrix}
\mbf{\Lambda}_{22}^{(k-1)} \\
\mbf{\lambda}^*_{k-1} \\
\mbf{N} 
\end{bmatrix} = 0,$$
we obtain
\begin{equation}
\label{Lambda_estimate}
 \mbf{\lambda}^*_{k-1} = -\ell_{kk}^{-1}\mbf{\ell}^*_{k-1}\mbf{\Lambda}_{22}^{(k-1)}. 
\end{equation}
Since $\mbf{\Lambda}_{22} = \mbf{U}_{22}\mbf{B}_{22}$, we see that 
$\mbf{L}_{22} = \mbf{B}_{22}^{-1}\mbf{\Omega}_{22}$. Hence, 
\begin{align} 
\mbf{\ell}^*_{k-1}\mbf{\Lambda}_{22}^{(k-1)} & = (\ell_{k1},\dots,\ell_{k,k-1})\mbf{\Lambda}_{22}^{(k-1)} =
\left(\sum_{i =1}^{k-1} \beta_{ki}\omega_{i1},\dots,\sum_{i = 1}^{k-1}\beta_{ki}\omega_{i,k-1}
\right)\mbf{\Lambda}_{22}^{(k-1)} \notag \\ 
&= \left(\sum_{j=1}^{k-1}\sum_{i =1}^{k-1} \beta_{ki}\omega_{ij}\lambda_{j1},\dots,\sum_{j=1}^{k-1}\sum_{i =
1}^{k-1}\beta_{ki}\omega_{ij}\lambda_{j,k-1}  \right) \notag \\
& = \left(\sum_{i=1}^{k-1}\beta_{ki}\sum_{j =1}^{k-1}
\omega_{ij}\lambda_{j1},\dots,\sum_{i=1}^{k-1}\beta_{ki}\sum_{j = 1}^{k-1}\omega_{ij}\lambda_{j,k-1}  \right)
\notag \\
& = \left(\sum_{i =1}^{k-1} \beta_{ki}b_{i1},\dots,\sum_{i = 1}^{k-1}\beta_{ki}b_{i,k-1}  \right) =
\mbf{\beta}^*_{k-1}\mbf{B}_{22}^{(k-1)}. \label{B_estimate}
\end{align}
Combining~\eqref{B_estimate} with~\eqref{Lambda_estimate} and noting that $\lambda_{kk} = 1$ (since 
the diagonal entries of $\bL$ are 1), we arrive at~\eqref{lambda_bound}.

The proof for~\eqref{l_bound} is similar.
\end{proof}

\if 0
\begin{lem}\label{11}
Let $\mbf{A}_{11}^{-1} = (\alpha_{jk})$. Then,
\[ |\ell_{jk}| \leq \kappa \left| \mbf{A}_{11}^{(k-1)}\mbf{\alpha}^*_{k-1}(j) \right| \]
for $1-k \leq j \leq 0$ and $\kappa$ is independent of $k$. 
\end{lem}
\begin{proof}
Similar to the previous proof, we write
\[ \hat{\mbf{\Lambda}} = \left( \begin{array}{lll} {\mbf{D}} & 0 & 0 \\ \mbf{v}^* & \lambda_{kk} & 0 \\ {\mbf{M}} & \mbf{\lambda}_{k-1} & \hat{\mbf{\Lambda}}_{(k-1)} \end{array}\right), \]
from which we see
\[ \hat{\mbf{L}} = \left( \begin{array}{lll} \mbf{D}^{-1} & 0 & 0 \\ \mbf{w}^* & \ell_{kk} & 0 \\ \mbf{N} & \mbf{\ell}_{k-1} & \hat{\mbf{L}}_{(k-1)} \end{array}\right). \]
Then we obtain the expression
\[ \mbf{\ell}_{k-1} = -\lambda_{kk}^{-1} \hat{\mbf{L}}_{(k-1)}\mbf{\lambda}_{k-1}. \]
Now, from the remark above, we know that $\hat{\mbf{L}} = \hat{\mbf{A}}\hat{\mbf{\Omega}}$, from which we see that
$\hat{\mbf{\Lambda}} = \hat{\mbf{U}}\hat{\mbf{A}}^{-1}$. We compute
\begin{align*} 
\hat{\mbf{L}}_{(k-1)}\mbf{\lambda}_{k-1} & = \hat{\mbf{L}}_{(k-1)}(\lambda_{1-k,-k},\dots,\lambda_{-1,-k})^t = \hat{\mbf{L}}_{(k-1)}\left( \sum_{i=-1}^{1-k}u_{1-k,i}\alpha_{i,-k}, \dots, \sum_{i=-1}^{1-k}u_{-1,i}\alpha_{i,-k}  \right)^t \\
& = \left( \sum_{j=-1}^{1-k}\sum_{i=-1}^{1-k}u_{ji}\alpha_{i,-k}\ell_{1-k,j}, \dots, \sum_{j=-1}^{1-k}\sum_{i=-1}^{1-k}u_{ji}\alpha_{i,-k}\ell_{-1,j}  \right)^t \\
& = \left( \sum_{i=-1}^{1-k}\alpha_{i,-k}\sum_{j=-1}^{1-k}\ell_{1-k,j}u_{ji}, \dots, \sum_{i=-1}^{1-k}\alpha_{i,-k}\sum_{j=-1}^{1-k}\ell_{-1,j} u_{ji} \right)^t \\
& = \left( \sum_{i=-1}^{1-k}a_{1-k,i}\alpha_{i,-k}, \dots, \sum_{i=-1}^{1-k}a_{-1,i}\alpha_{i,-k}  \right)^t \\
& = \hat{\mbf{A}}_{(k-1)}\mbf{\alpha}_{k-1}.
\end{align*}
\end{proof}
\fi
\begin{lem}\label{21}
Assume $\bA$ has the LU-factorization $\mbf{A} = \mbf{L}\mbf{U}$ in $\Blt$ and let $\mathscr{A}$ be one of the
algebras in Definition~\ref{algebras}. Suppose $\mbf{A} \in \mathscr{A}$ and the elements of $\mbf{L}$ in the
blocks $\mbf{L}_{11}$ satisfy the appropriate decay condition. Then the elements in the block $\mbf{L}_{21}$ also satisfy that decay condition.
\end{lem}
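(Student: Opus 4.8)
The plan is to extract $\bL_{21}$ from the block form of $\bA=\bL\bU$ and then recognize it as a product of matrices already known to lie in $\mathscr{A}$. Comparing the $(1,1)$- and $(2,1)$-blocks of $\bA=\bL\bU$ gives $\bA_{11}=\bL_{11}\bU_{11}$ and $\bA_{21}=\bL_{21}\bU_{11}$. Here $\bL_{11}$ and $\bU_{11}$ are invertible, being the corner compressions of the invertible triangular matrices $\bL$ and $\bU$, while $\bA_{11}$ is invertible by the observation recorded just before Theorem~\ref{main2}; consequently $\bU_{11}^{-1}=\bA_{11}^{-1}\bL_{11}$, and substituting into $\bL_{21}=\bA_{21}\bU_{11}^{-1}$ yields the key identity
\[
\bL_{21}=\bA_{21}\,\bA_{11}^{-1}\,\bL_{11}.
\]
Read as an identity of zero-padded operators on $\ell^2(\Z)$ this is a genuine composition: $\bL_{11}$ and $\bA_{11}^{-1}$ act within $\ell^2(\Z_{<0})$, while $\bA_{21}$ maps $\ell^2(\Z_{<0})$ into $\ell^2(\Z_{\ge0})$, so the product has $\bL_{21}$ as its only nonzero block.

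It then remains to see that all three factors lie in $\mathscr{A}$. For $\bA_{21}$ this is immediate: its nonzero entries are among those of $\bA\in\mathscr{A}$, and each of $\mathscr{A}_v$, $\mathscr{A}_v^1$, $\mathscr{C}_v$ is solid, so $\|\bA_{21}\|_{\mathscr{A}}\le\|\bA\|_{\mathscr{A}}$. For $\bL_{11}$ it is the hypothesis of the lemma. For $\bA_{11}^{-1}$ it is the one point that needs argument: inverse-closedness of $\mathscr{A}$ gives $\bA^{-1}\in\mathscr{A}$, and inverse-closedness transfers to the half-line corner, e.g.\ via the block-diagonal embedding $M\mapsto\bI\oplus M$, which carries a matrix in the decay algebra over $\ell^2(\Z_{<0})$ to one in $\mathscr{A}$ over $\ell^2(\Z)$ while preserving invertibility (equivalently, the inverse-closedness theorems quoted in Section~\ref{s:preliminaries} hold verbatim over $\ell^2(\mathbb{N})$). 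Since $\mathscr{A}$ is a Banach algebra, $\bL_{21}=\bA_{21}\bA_{11}^{-1}\bL_{11}\in\mathscr{A}$, with $\|\bL_{21}\|_{\mathscr{A}}\le\|\bA\|_{\mathscr{A}}\,\|\bA_{11}^{-1}\|_{\mathscr{A}}\,\|\bL_{11}\|_{\mathscr{A}}$, which is precisely the assertion that the entries of $\bL_{21}$ obey the decay condition defining $\mathscr{A}$.

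The only genuine obstacle is this passage from inverse-closedness on $\ell^2(\Z)$ to inverse-closedness on the corner $\bA_{11}$; everything else is bookkeeping with the block identities. If one prefers to keep the product estimate self-contained at the level of entries, one can instead write, for $j\ge0>k$, $\ell_{jk}=\sum_{p,q<0}a_{jp}(\bA_{11}^{-1})_{pq}(\bL_{11})_{qk}$, bound $|a_{jp}|$, $|(\bL_{11})_{qk}|$, and $|(\bA_{11}^{-1})_{pq}|$ by the relevant weight, and sum using sub-multiplicativity of the weight — this is just the proof that $\mathscr{A}$ is closed under multiplication, specialized here. The transposed argument, $\bU_{12}=\bL_{11}^{-1}\bA_{12}$, handles the block $\bU_{12}$ in the same way.
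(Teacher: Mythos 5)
Your proof is correct and follows essentially the same route as the paper: the paper writes $\bL_{21}=\bA_{21}\mbf{\Omega}_{11}$ with $\mbf{\Omega}_{11}=\bU_{11}^{-1}$ and $\bA_{11}=\bL_{11}\bU_{11}$, then invokes inverse-closedness of $\mathscr{A}$ on the corner blocks, which is exactly your identity $\bL_{21}=\bA_{21}\bA_{11}^{-1}\bL_{11}$ unwound. Your explicit treatment of inverse-closedness on the half-line corner (via the embedding $M\mapsto M\oplus\bI$ together with solidity) fills in a point the paper leaves implicit, and is a welcome addition; only the remark that $\bA^{-1}\in\mathscr{A}$ is superfluous, since what you actually need and use is $\bA_{11}^{-1}$, not the corner of $\bA^{-1}$.
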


\begin{proof}
It is convenient for the proof of this lemma to consider the individual blocks of $\bA$ and $\bA^{-1}$
as operators acting on $\ell^2(\Z)$ as indicated in the beginning of this section.
Clearly, $\mbf{L}_{21} = \mbf{A}_{21}\mbf{\Omega}_{11}$, and due to the upper-triangular structure of
$\mbf{U}$ we also have that $\mbf{\Omega}_{11} = \mbf{U}_{11}^{-1}$. Moreover $\mbf{A}_{11} =
\mbf{L}_{11}\mbf{U}_{11}$. Hence, since the algebra $\mathscr{A}$ to which $\mbf{A}_{11}$ and $\mbf{L}_{11}$
belong is inverse-closed, we have that $\mbf{U}_{11} \in \mathscr{A}$. Thus $\mbf{\Omega}_{11} \in \mathscr{A}$
and therefore $\mbf{L}_{21} \in \mathscr{A}$.
\if 0
The proof now proceeds by cases.
\begin{enumerate}
\item
Suppose $\mbf{A} \in \mathscr{A}_v$ and that $\mbf{\Omega}_{11}$ satisfies
\[ |\omega_{jk}| \leq \kappa v^{-1}(j -k), \]
for $-\infty < j,k \leq 0$. Then, for $0 \leq j < \infty$ and $-\infty < k \leq 0$, we have
\[ |\ell_{jk}| \leq \sum_{i = 0}^{-\infty} |a_{ji}\omega_{ik}| \leq \kappa \sum_{i = 0}^{-\infty} \frac{1}{v(j - i)}\frac{1}{v(i-k)} \leq \frac{\kappa}{v(j-k)}, \]
where the last inequality follows from the fact that $v$ is sub-convolutive. \\
\item
Suppose that $\mbf{A} \in \mathscr{A}_v^1$ and $\mbf{\Omega}_{11}$ satisfies
\[ \sup_{j \leq 0}\sum_{k \leq 0}|\omega_{jk}|v(j-k) < \infty  \hspace{2pc} \mbox{and}  \hspace{2pc} \sup_{k \leq 0}\sum_{j \leq 0}|\omega_{jk}|v(j-k) < \infty. \]
Then, for $0 \leq j < \infty$ and $-\infty < k \leq 0$, we have
\begin{align*}
\sup_{j \geq 0}\sum_{k \leq 0} |\ell_{jk}|v(j-k) & \leq \sup_{j \geq 0}\sum_{k \leq 0} \sum_{i \leq 0} |a_{ji}\omega_{ik}|v(j-k) \\
& \leq \sup_{j \geq 0}\sum_{k \leq 0} \sum_{i \leq 0} |a_{ji}\omega_{ik}|v(j-i)v(i-k) \\
& \leq \sup_{j \geq 0}\sum_{i \leq 0} |a_{ji}|v(j-i)\sum_{k \leq 0}|\omega_{ik}| v(i-k) \\
& \leq \sup_{j \geq 0}\sum_{i \leq 0} |a_{ji}|v(j-i)\sup_{i\leq 0}\sum_{k \leq 0}|\omega_{ik}| v(i-k) \\
& \leq \|\mbf{A}\|_{\mathscr{A}_v^1}\|\mbf{\Omega}\|_{\mathscr{A}_v^1} < \infty.
\end{align*}
Note that the same argument applies if we take the supremum with respect to $k$ and the sum with respect to $j$ instead.
\item
Suppose that $\mbf{A} \in \mathscr{C}_v$ and there is some $\gamma_{\mbf{\Omega}} \in \ell^1_v$ such that
$\mbf{\Omega}_{11}$ satisfies
\[ |\omega_{jk}| \leq \gamma_{\mbf{\Omega}}(j-k). \]
Then, for $0 \leq j < \infty$ and $-\infty < k \leq 0$, we have
\[  |\ell_{jk}| \leq \sum_{i \leq 0} |a_{ji}\omega_{ik}| \leq \sum_{i \leq 0} \gamma_{\mbf{A}}(j-i)\gamma_{\mbf{\Omega}}(i-k) \leq \sum_{i \in \mathbb{Z}} \gamma_{\mbf{A}}(j-i)\gamma_{\mbf{\Omega}}(i-k) = [\gamma_{\mbf{A}}*\gamma_{\mbf{\Omega}}](j-k). \]
Then, since $\ell^1_v$ is a convolution algebra, we have that ${\mbf{L}}_{21}$ satisfies the appropriate decay estimate.
\end{enumerate}
\fi
\end{proof}

Now, to prove Theorem \ref{main2}, we only need to show that if $\mbf{A} \in \mathscr{A}$, then $\mbf{L}_{11}$ and
$\mbf{L}_{22}$ are also in $\mathscr{A}$.
\begin{proof}[Proof of Theorem~\ref{main2}]
\begin{enumerate}
\item
Suppose that $\mbf{A} \in \mathscr{A}_v$. Then, by~\eqref{lambda_bound} of Lemma \ref{le:bounds}, we have
\[ |{\lambda}_{kj}| \leq \left| \mbf{\beta}^*_{k-1}\mbf{B}_{22}^{(k-1)}(j) \right|, \]
for $k,j \geq 0$. Since both $\mbf{B}_{22}$ and $\mbf{B}_{22}^{-1}$ have the appropriate decay property, we have
\[ |{\lambda}_{kj}| \leq  \left| \mbf{\beta}^*_{k-1}\mbf{B}_{22}^{(k-1)}(j) \right| \leq \sum_{i =1}^{k-1} |\beta_{ki}||b_{ij}| \leq  \sum_{i=1}^{k-1} \frac{1}{v(k-i)}\frac{1}{v(i - j)} \leq \frac{1}{v(k-j)},\]
where the last inequality holds by the fact that $v$ is sub-convolutive. 
Thus we have shown that the entries of ${\mbf{\Lambda}}_{22}$ satisfy the appropriate decay estimate. Then, since $\mathscr{A}_v$ is inverse closed, we have that 
$\mbf{L}_{22}$ satisfies the decay estimate. 

By \eqref{l_bound} of Lemma \ref{le:bounds} we have
\[ |\ell_{jk}| \leq  \left| \mbf{A}_{11}^{(k-1)}\mbf{\alpha}_{k-1}(j) \right|, \]
for $j,k \leq -1$. Since both $\mbf{A}_{11}$ and $\mbf{A}_{11}^{-1}$ have the appropriate decay, we have
\[ |\ell_{jk}| \leq  \left| \mbf{A}_{11}^{(k-1)}\mbf{\alpha}_{k-1}(j) \right| \leq\sum_{i =-1}^{1-k} |a_{ji}||\alpha_{ik}| \leq  \sum_{i=1}^{k-1} \frac{1}{v(j-i)}\frac{1}{v(i - k)} \leq \frac{1}{v(j-k)} . \]
Hence $\mbf{L}_{11}$ satisfies the decay estimate. Then, by Lemma \ref{21}, $\mbf{L}_{21}$ satisfies the decay
condition. So all three blocks of $\mbf{L}$ have the appropriate decay. Therefore, $\mbf{L} \in \mathscr{A}_v$.
Then, since $\mathscr{A}_v$ is inverse-closed, $\mbf{\Lambda} \in \mathscr{A}_v$. Thus, since $\mbf{U} = \mbf{\Lambda}\mbf{A}$, we have that $\mbf{U} \in \mathscr{A}_v$.
\item
Suppose that $\mbf{A} \in \mathscr{A}_v^1$.  Then, by \eqref{lambda_bound}, we have
\[ |{\lambda}_{kj}| \leq  \left| \mbf{\beta}^*_{k-1}\mbf{B}_{22}^{(k-1)}(j) \right|, \]
for $k,j \geq 0$. Since both $\mbf{B}_{22}$ and $\mbf{B}_{22}^{-1}$ have the appropriate decay property, we have
\begin{align*}
\sup_{k \geq 0}\sum_{j \geq 0}|{\lambda}_{kj}|v(k-j) & \leq \sup_{k \geq 0}\sum_{j \geq 0} \left|
\mbf{\beta}^*_{k-1}{\mbf{B}}_{22}^{(k-1)}(j) \right|v(k-j) \\ &\leq \sup_{k \geq 0}\sum_{j \geq 0}\sum_{i =1}^{k-1} |\beta_{ki}||b_{ij}|v(k-j) \\
& \leq \sup_{k \geq 0}\sum_{j \geq 0}\sum_{i \geq 0} |\beta_{ki}||b_{ij}|v(k-i)v(i-j) \\ & \leq \sup_{k \geq 0}\sum_{i \geq 0} |\beta_{ki}|v(k-i)\sum_{j \geq 0}|b_{ij}|v(i-j) \\
& \leq \sup_{k \geq 0}\sum_{i \geq 0} |\beta_{ki}|v(k-i) \sup_{i \geq 0}\sum_{j \geq 0}|b_{ij}|v(i-j) < \infty.
\end{align*}
Note that the same argument applies if we take the supremum with respect to $j$ and the sum with respect to $k$
instead. Hence we have shown that the entries of ${\mbf{\Lambda}}_{22}$ satisfy the appropriate decay estimate.
Then, since $\mathscr{A}_v^1$ is inverse closed, we have that $\mbf{L}_{22}$ satisfies the decay estimate. 

Now,  by  \eqref{l_bound} we have
\[ |\ell_{jk}| \leq  \left| \mbf{A}_{11}^{(k-1)}\mbf{\alpha}_{k-1}(j) \right|, \]
for $j,k \leq -1$. Since both $\mbf{A}_{11}$ and $\mbf{A}_{11}^{-1}$ have the appropriate decay, we have
\begin{align*}
\sup_{k \leq -1}\sum_{j \leq -1}|{\ell}_{jk}|v(j-k) & \leq \sup_{k \leq -1}\sum_{j \leq -1}\left|
\mbf{A}_{11}^{(k-1)}\mbf{\alpha}_{k-1}(j) \right|v(j-k) \\
& \leq \sup_{k \leq -1}\sum_{j \leq -1}\sum_{i =-1}^{1-k} |a_{ji}||\alpha_{ik}|v(j-k) \\
& \leq \sup_{k \leq -1}\sum_{j \leq -1}\sum_{i \leq -1} |a_{ji}||\alpha_{ik}|v(j-i)v(i-k) \\
& \leq \sup_{k \leq -1}\sum_{i \leq -1} |\alpha_{ik}|v(i-k)\sum_{j \leq -1}|a_{ji}|v(j-i) \\
& \leq \sup_{k \leq -1}\sum_{i \leq -1} |\alpha_{ik}|v(i-k) \sup_{i \leq -1}\sum_{j \leq -1}|a_{ji}|v(j-i) < \infty.
\end{align*}
Again, the same argument applies if we take the supremum with respect to $j$ and the sum with respect to $k$
instead. Hence $\mbf{L}_{11}$ satisfies the decay estimate. Then, by Lemma \ref{21}, $\mbf{L}_{21}$ satisfies the
decay condition. So all three blocks of $\mbf{L}$ have the appropriate decay. Therefore, $\mbf{L} \in
\mathscr{A}_v^1$. Then, since $\mathscr{A}_v^1$ is inverse-closed, $\mbf{\Lambda} \in \mathscr{A}_v$. Hence, since $\mbf{U} = \mbf{\Lambda}\mbf{A}$, we have that $\mbf{U} \in \mathscr{A}_v^1$.
\item
Suppose that $\mbf{A} \in \mathscr{C}_v$. By \cite{GrKl12}, we know that there is some $\gamma_{\mbf{B}} \in
\ell^1_v$ such that the entries of $\mbf{B}_{22}$ and  $\mbf{B}_{22}^{-1}$ are bounded by
$\gamma_{\mbf{B}}$ for all $n$. Then, by \eqref{B_estimate}, we have
\[ |\lambda_{kj}| \leq  \sum_{i = 1}^{k-1} |\beta_{ki} b_{ij}| \leq\sum_{i \in \mathbb{Z}} \gamma_{\mbf{B}}(k-i)\gamma_{\mbf{B}}(i-j) = [\gamma_{\mbf{B}}*\gamma_{\mbf{B}}](k-j). \]
Since $\ell^1_v$ is a convolution algebra, we have that $\mbf{\Lambda}$ satisfies the appropriate decay estimate, and so does $\mbf{L}$ by inverse-closedness.

Now consider $\mbf{L}_{11}$. Then, there is some sequence $\gamma_{\mbf{A}} \in \ell^1_v$ such that the entries of
$\mbf{A}_{11}$ and $ \mbf{A}_{11}^{-1}$ are bounded by $\gamma_{\mbf{A}}$ for all $n$.
Then, by \eqref{l_bound}, we have
\[ |\ell_{jk}| \leq \sum_{i = -1}^{1-k} |a_{ji}\alpha_{ik}| \leq \sum_{i \in \mathbb{Z}} \gamma_{\mbf{A}}(j-i)\gamma_{\mbf{A}}(i-k) = [\gamma_{\mbf{A}}*\gamma_{\mbf{A}}](j-k). \]
Again, the result follows from the fact that $\ell^1_v$ is a convolution algebra.
\end{enumerate}
\end{proof}

\section{Localization of the Cholesky, QR-, and Polar factorization} \label{s:cholesky}

\subsection{The Cholesky factorization}

\begin{cor} 
\label{cor:cholesky}
Assume that $\mathscr{A}$ is a strongly decomposable inverse closed subalgebra of $\mathscr{B}(\ell^2)$ that satisfies \eqref{incl}  and \eqref{dominorm} (for example we can choose  $\mathscr{A}$ 
to be one of the algebras in Definition \ref{algebras}). Then any positive definite matrix $\mbf{A}\in\mathscr{A}$  
admits a  Cholesky factorization in $\mathscr{A}$.
\end{cor}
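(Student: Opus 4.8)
The plan is to bootstrap from the LU-result of Theorem~\ref{mainthm1}, exploiting that for a hermitian positive definite $\bA$ a Cholesky factorization is already available in the ambient algebra $\mathscr{B}_c(\ell^2)$. Indeed, as recorded in Remark~\ref{uniqrem}, the results of Section~3 of Arveson~\cite{Arv75} give $\bA = \bC\bC^*$ with $\bC,\bC^{-1}\in\mathscr{L}\cap\mathscr{B}_c(\ell^2)$ and positive diagonal. I would then peel off the diagonal of $\bC$, read this as an LU-factorization of $\bA$ in $\mathscr{B}_c(\ell^2)$, push it into $\mathscr{A}$ via Theorem~\ref{mainthm1}, and recombine to land $\bC$ itself inside $\mathscr{A}$.

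In detail: let $\bD$ be the diagonal matrix carrying the main diagonal of $\bC$, with entries $c_{jj}>0$. Since $\bC$ is bounded and $\bC^{-1}$ is lower triangular with diagonal entries $1/c_{jj}$, the $c_{jj}$ are bounded above and below, so $\bD$ and $\bD^{-1}$ are bounded diagonal matrices and hence lie in $\mathscr{A}$ (a diagonal matrix is $0$-banded, so $\mathscr{D}\subset\mathscr{B}_b\subset\mathscr{B}_{\gamma}\subset\mathscr{A}$ by Proposition~\ref{propprop}, item~4, and \eqref{incl}). Put $\bL_0 := \bC\bD^{-1}$ and $\bU_0 := \bD\bC^*$; then $\bA=\bL_0\bU_0$ is an LU-factorization in $\mathscr{B}_c(\ell^2)$ in the sense of Definition~\ref{def:factorizations}, with $\bL_0$ of unit diagonal. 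By Theorem~\ref{mainthm1}, $\bA$ admits \emph{some} LU-factorization $\bA=\bL_1\bU_1$ in $\mathscr{A}$. By the uniqueness statement of Remark~\ref{uniqrem}, $\bD' := \bL_1^{-1}\bL_0 = \bU_1\bU_0^{-1}$ is an invertible diagonal matrix; as $\bD' = \bL_1^{-1}\bL_0$ and $(\bD')^{-1} = \bL_0^{-1}\bL_1$ are both bounded, the same argument as above gives $\bD',(\bD')^{-1}\in\mathscr{A}$, whence $\bL_0 = \bL_1\bD'\in\mathscr{L}\cap\mathscr{A}$ and $\bU_0 = (\bD')^{-1}\bU_1\in\mathscr{L}^*\cap\mathscr{A}$. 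Finally $\bC = \bL_0\bD\in\mathscr{L}\cap\mathscr{A}$, $\bC^{-1} = \bD^{-1}\bL_0^{-1}\in\mathscr{L}\cap\mathscr{A}$, the diagonal of $\bC$ is positive, and $\bC\bC^*=\bA$, so $\bA=\bC\bC^*$ is a Cholesky factorization in $\mathscr{A}$.

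The argument is essentially bookkeeping; the only point requiring care is the uniqueness step, since Theorem~\ref{mainthm1} yields merely \emph{some} LU-factorization in $\mathscr{A}$, so one must invoke uniqueness-up-to-a-diagonal and verify that the connecting diagonal (bounded, with bounded inverse) belongs to $\mathscr{A}$ — this is what pins the specific factor $\bC$ down inside $\mathscr{A}$. One should also observe that $\mathscr{A}$ is closed under the adjoint (true for all the concrete algebras of Definition~\ref{algebras}, since the weights are even), so that $\bC^*\in\mathscr{A}$ as well. As an alternative route avoiding~\cite{Arv75}, one could start directly from the LU-factorization $\bA=\bL\bU$ in $\mathscr{A}$ given by Theorem~\ref{mainthm1} (normalized so $\bL$ has unit diagonal), use $\bA=\bA^*$ together with uniqueness to deduce $\bU=\bD\bL^*$ for a diagonal $\bD\in\mathscr{A}$ that is positive by positive-definiteness of $\bA$, and set $\bC=\bL\bD^{1/2}$; this needs only the trivial observation that the square root of a positive, boundedly invertible diagonal matrix is again such a matrix, hence in $\mathscr{A}$.
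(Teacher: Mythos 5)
There is a genuine gap: both of your routes ultimately rest on Theorem~\ref{mainthm1}, whose hypothesis is that $\bA$ admits an LU-factorization \emph{in $\mathscr{B}_c(\ell^2)$}, i.e.\ with $\bL,\bL^{-1},\bU,\bU^{-1}$ all M-continuous. You claim to obtain this from Arveson via Remark~\ref{uniqrem}, writing $\bC,\bC^{-1}\in\mathscr{L}\cap\mathscr{B}_c(\ell^2)$, but Arveson's factorization theorem (and the remark) only provide $\bC,\bC^{-1}$ bounded and triangular, i.e.\ a factorization in $\mathscr{B}(\ell^2)$; membership of the factors in $\mathscr{B}_c(\ell^2)$ is an additional localization statement of exactly the kind the corollary is asserting, and it is not automatic. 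Indeed, for a positive definite Laurent matrix with continuous symbol $a>0$ (so $\bA\in\mathscr{B}_c(\ell^2)$), the Cholesky factor is the triangular Laurent-type matrix whose symbol is the spectral factor $h$ of $a$, and $h$ need not be continuous on $\mathbb{T}$ (the conjugate function of $\log a$ can fail to be continuous), so $\bC\notin\mathscr{B}_c(\ell^2)$ even though $\bC,\bC^{-1}$ are bounded and triangular. Such examples are of course not in $\mathscr{A}$, but they show that the implication you need --- positive definite and M-continuous implies Cholesky factor M-continuous --- is false in general, so it cannot be cited as known, and assuming it for $\bA\in\mathscr{A}$ would be circular. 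Your ``alternative route'' suffers from the same defect, since it also starts from ``the LU-factorization given by Theorem~\ref{mainthm1}''.

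The paper sidesteps this issue entirely, and your argument is repaired in the same way: since $\bA$ is positive definite one can rescale so that $\|\bI-\alpha\bA\|_{\Blt}<1$ for some $\alpha>0$, and then Theorem~\ref{GLdec} (rather than Theorem~\ref{mainthm1}) yields an LU-factorization $\bA=\bL\bU$ in $\mathscr{A}$ directly, via series converging in $\mathscr{A}$, with no a priori factorization in $\mathscr{B}_c(\ell^2)$ required. From there the paper argues exactly as in the second half of your alternative route: $\bA=\bA^*$ together with triangularity forces $\bA=\bL\bD\bL^*$ with $\bD$ diagonal and positive definite, and $\bC=\bL\bD^{1/2}\in\mathscr{A}$ gives the Cholesky factorization, with $\bC^{-1}\in\mathscr{A}$ by inverse-closedness. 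Your surrounding bookkeeping is fine --- bounded diagonal matrices do lie in $\mathscr{B}_{\gamma}(\ell^2)\subset\mathscr{A}$ by \eqref{incl}, and the uniqueness-up-to-diagonal manipulations are sound once an LU-factorization in $\mathscr{A}$ is legitimately in hand; the only flawed step is the appeal to Arveson plus Theorem~\ref{mainthm1} to produce it.
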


\begin{proof}
Since $\bA$ is positive definite we can always rescale $\bA$ so that $\|\bI - \alpha\bA\|_{\Blt} < 1$ for some $\alpha > 0$ (note that we only claim $\|\bI - \alpha\bA\|_{\Blt} < 1$ and not $\|\bI - \alpha\bA\|_{\mathscr{A}} < 1$) . Hence,
by Theorem \ref{GLdec} we  know that $\mbf{A} = \mbf{L}\mbf{U}$ where $\mbf{L}, \mbf{U} \in \mathscr{A}$ and $\mbf{L}_{ii}=1$, $i\in \mathbb Z$. 
Since $\bA = \bA^* = \bL \bU = \bU^* \bL^*$, there holds $\bA = \bL \bD \bL^*$, where $\bD$ has to be positive
definite, because $\bA$ is. Then $\bA = (\bL \bD^{\frac{1}{2}}) (\bL\bD^{\frac{1}{2}})^*$
is the Cholesky factorization of $\bA$, and obviously $\bL \bD^{\frac{1}{2}} \in \mathscr{A}$.
\end{proof}


We note that for the special case when $\mathscr{A}$ is the Jaffard algebra and $\bA$ is finite-dimensional,
Corollary~\ref{cor:cholesky} recovers Lemma A.1 in~\cite{HallJin09}. Furthermore,  Corollary~\ref{cor:cholesky}
contains as special case Lemma II.3.2. of~\cite{GKW89}, by setting  $\mathscr{A}={\cal C}_v$ with $v \equiv 1$.

\subsection{The QR-factorization}
\begin{cor}
\label{cor:qr}
Assume that $\mathscr{A}$ is a strongly decomposable inverse closed subalgebra of $\mathscr{B}(\ell^2)$ that satisfies \eqref{incl}  and \eqref{dominorm}  (for example we can choose  $\mathscr{A}$ 
to be one of the algebras in Definition \ref{algebras}).  Let $\mbf{A} \in \mathscr{A}$ be such that $\mbf{A}$ is invertible and suppose that its QR-decomposition $\mbf{A} = \mbf{Q}\mbf{R}$ exists. Then $\mbf{Q},\mbf{R} \in \mathscr{A}$.
\end{cor}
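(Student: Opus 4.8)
The plan is to reduce the statement to the localization of the Cholesky factorization (Corollary~\ref{cor:cholesky}) via the classical identity $\mathbf{A}^{\ast}\mathbf{A}=\mathbf{R}^{\ast}\mathbf{R}$.

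First I would form the Gram matrix $\mathbf{G}:=\mathbf{A}^{\ast}\mathbf{A}$. Since $\mathbf{A}\in\mathscr{A}$ and the algebras under consideration are closed under taking adjoints (this is precisely why the weights are assumed even; for a general $\mathscr{A}$ we tacitly assume $\mathscr{A}=\mathscr{A}^{\ast}$), we get $\mathbf{G}\in\mathscr{A}$. Moreover $\mathbf{G}$ is Hermitian and, because $\mathbf{A}$ is invertible, positive definite. Corollary~\ref{cor:cholesky} then produces a Cholesky factorization of $\mathbf{G}$ in $\mathscr{A}$, say $\mathbf{G}=\mathbf{C}\mathbf{C}^{\ast}$ with $\mathbf{C},\mathbf{C}^{-1}\in\mathscr{L}\cap\mathscr{A}$ and positive diagonal.

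Next I would bring in the given factorization $\mathbf{A}=\mathbf{Q}\mathbf{R}$. As $\mathbf{Q}$ is unitary, $\mathbf{G}=\mathbf{R}^{\ast}\mathbf{Q}^{\ast}\mathbf{Q}\mathbf{R}=\mathbf{R}^{\ast}\mathbf{R}$; here $\mathbf{R}$ is upper triangular and invertible with upper-triangular inverse, so $\mathbf{R}^{\ast}$ is a lower-triangular invertible matrix satisfying $\mathbf{G}=\mathbf{R}^{\ast}(\mathbf{R}^{\ast})^{\ast}$, i.e.\ a second Cholesky-type factor of $\mathbf{G}$. Comparing the two factorizations yields $\mathbf{C}^{-1}\mathbf{R}^{\ast}=\mathbf{C}^{\ast}\mathbf{R}^{-1}$; the left-hand side is lower triangular and the right-hand side is upper triangular, so their common value is a diagonal matrix $\mathbf{D}$. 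Being a product of bounded operators, $\mathbf{D}$ is bounded, and a bounded diagonal matrix lies in $\mathscr{A}$ (for instance $\mathbf{D}\in\mathscr{B}_{b}\subseteq\mathscr{A}$ by Proposition~\ref{propprop} and~\eqref{incl}). Hence $\mathbf{R}^{\ast}=\mathbf{C}\mathbf{D}\in\mathscr{A}$, so $\mathbf{R}\in\mathscr{A}$, and finally $\mathbf{Q}=\mathbf{A}\mathbf{R}^{-1}\in\mathscr{A}$ since $\mathscr{A}$ is an inverse-closed algebra. (If one prefers the normalization with $\mathbf{R}$ having positive diagonal, one first multiplies $\mathbf{R}$ on the left by the diagonal unitary of the phases of its diagonal entries, which are nonzero because $\mathbf{R}$ is triangular and invertible, and then gets $\mathbf{R}^{\ast}=\mathbf{C}$ outright from uniqueness of the Cholesky factorization.)

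The triangular bookkeeping and the fact that a bounded diagonal matrix belongs to $\mathscr{A}$ are routine. The point requiring care is the uniqueness used above, namely that two factorizations of $\mathbf{G}$ as (lower triangular) times (upper triangular) can differ only by an invertible diagonal factor; this is the Cholesky counterpart of the LU uniqueness recorded in Remark~\ref{uniqrem}, and it follows from the observation that a matrix which is simultaneously lower and upper triangular must be diagonal, together with invertibility of the factors and of $\mathbf{G}$. One should also read ``the QR-decomposition exists'' as including that both $\mathbf{R}$ and $\mathbf{R}^{-1}$ are upper triangular --- the natural content of the Arveson-type existence results cited in Remark~\ref{uniqrem} --- since the triangularity of $\mathbf{R}^{-1}$ is what makes $\mathbf{C}^{\ast}\mathbf{R}^{-1}$ upper triangular.
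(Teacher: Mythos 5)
Your proposal is correct and follows essentially the same route as the paper: both pass to the Gram matrix $\mbf{A}^*\mbf{A}\in\mathscr{A}$, apply the Cholesky result (Corollary~\ref{cor:cholesky}) to it, and identify $\mbf{R}$ with the Cholesky factor up to a diagonal matrix before concluding $\mbf{R},\mbf{R}^{-1},\mbf{Q}\in\mathscr{A}$ by inverse-closedness. Your extra bookkeeping (the triangularity comparison giving the diagonal $\mbf{D}$, which the paper even notes is unitary) just makes explicit what the paper's shorter proof asserts.
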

\begin{proof}
Consider the matrix $\mbf{V} = \mbf{A}^*\mbf{A} = \mbf{R}\mbf{Q}^*\mbf{Q}\mbf{R} = \mbf{R}^*\mbf{R}$. Note that $\mbf{V} \in \mathscr{A}$ and it is invertible, positive definite, and hermitian. Let $\mbf{V} = \mbf{C}^*\mbf{C}$ be its Cholesky factorization. Then $\mbf{R} = \mbf{D}\mbf{C}$, where $\mbf{D}$ is a unitary diagonal matrix. So $\mbf{R} \in \mathscr{A}$, which implies $\mbf{R}^{-1} \in \mathscr{A}$. Since $\mathscr{A}$
is an algebra, it follows that $\mbf{Q} \in \mathscr{A}$.
\end{proof}

It is claimed (without proof) in Section II.B of~\cite{KM98} that the Gram-Schmidt orthogonalization would destroy 
the localization properties of a given set of vectors, while applying the L\"owdin orthogonalization procedure
(which essentially amounts to applying the inverse square root of a matrix) preserves localization. 
Corollary~\ref{cor:qr} shows that this claim is incorrect, since Gram-Schmidt does preserve localization as well.

\subsection{The Polar factorization}
\begin{thm}
Let $\mathscr{A}$ be one of the decay algebras in Definition \ref{algebras}. Let $\mbf{A} \in \mathscr{A}$ be invertible and suppose its polar decomposition $\mbf{A} = \mbf{P}\mbf{U}$ exists, where $\mbf{P}$ is positive definite. Then $\mbf{P}, \mbf{U} \in \mathscr{A}$.
\end{thm}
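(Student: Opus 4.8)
The plan is to reduce the polar factorization to the Cholesky factorization, exactly as the QR-factorization was handled in Corollary~\ref{cor:qr}. Write $\mbf{A} = \mbf{P}\mbf{U}$ with $\mbf{P}$ positive definite and $\mbf{U}$ unitary. First I would form the matrix $\mbf{A}\mbf{A}^* = \mbf{P}\mbf{U}\mbf{U}^*\mbf{P} = \mbf{P}^2$. Since $\mbf{A} \in \mathscr{A}$ and $\mathscr{A}$ is a $*$-closed algebra, we have $\mbf{A}\mbf{A}^* \in \mathscr{A}$, and it is invertible, Hermitian, and positive definite because $\mbf{A}$ is invertible. Thus $\mbf{P}^2 \in \mathscr{A}$.

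Next I would recover $\mbf{P}$ from $\mbf{P}^2$. The cleanest route is the holomorphic functional calculus: since $\mbf{P}^2$ is positive definite, its spectrum lies in a compact interval $[m, M] \subset (0,\infty)$, and the principal square root $z \mapsto \sqrt{z}$ is holomorphic on a neighborhood of $[m,M]$. Because each of the decay algebras $\mathscr{A}_v, \mathscr{A}_v^1, \mathscr{C}_v$ in Definition~\ref{algebras} is inverse closed in $\mathscr{B}(\ell^2)$, it is closed under the holomorphic functional calculus (a standard consequence of inverse-closedness: $(\lambda \mbf{I} - \mbf{P}^2)^{-1} \in \mathscr{A}$ for $\lambda$ outside the spectrum, and the Cauchy integral defining $\sqrt{\mbf{P}^2}$ converges in the $\mathscr{A}$-norm). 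Hence $\mbf{P} = \sqrt{\mbf{P}^2} \in \mathscr{A}$. Alternatively, one can avoid functional calculus and instead Cholesky-factor $\mbf{P}^2 = \mbf{C}^*\mbf{C}$ using Corollary~\ref{cor:cholesky}, but this gives $\mbf{C}$, not $\mbf{P}$, so one would still need an extra argument to pass from a triangular square-root-type factor to the positive definite square root; the functional calculus is more direct, so I would use that.

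Finally, since $\mbf{P} \in \mathscr{A}$ and $\mbf{P}$ is invertible (being positive definite with spectrum bounded away from $0$), inverse-closedness gives $\mbf{P}^{-1} \in \mathscr{A}$, and then $\mbf{U} = \mbf{P}^{-1}\mbf{A} \in \mathscr{A}$ because $\mathscr{A}$ is an algebra. This completes the argument. The main obstacle — and really the only nontrivial point — is justifying that taking the positive square root keeps us inside $\mathscr{A}$; this rests entirely on the inverse-closedness of $\mathscr{A}$ in $\mathscr{B}(\ell^2)$, which is quoted earlier in the excerpt, together with the elementary fact that inverse-closed Banach subalgebras are stable under holomorphic functional calculus. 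Everything else is formal manipulation with $*$-operations and the algebra property, mirroring the proof of Corollary~\ref{cor:qr}. (Note the statement as written has $\mbf{A} = \mbf{P}\mbf{U}$ with $\mbf{P}$ on the left, which is the reverse order from Definition~\ref{def:factorizations}; the argument is symmetric, using $\mbf{A}\mbf{A}^*$ in place of $\mbf{A}^*\mbf{A}$.)
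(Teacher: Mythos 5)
Your proof is correct and follows essentially the same route as the paper: the paper also recovers $\mbf{P}$ as the positive square root of the Gram matrix and then gets $\mbf{P}^{-1}$, hence $\mbf{U}$, from inverse-closedness and the algebra property, the only difference being that it cites Gardner's Banach square root theorem where you invoke the Riesz--Dunford functional calculus (which the paper itself uses in Section~\ref{s:functional}). Your use of $\mbf{A}\mbf{A}^{*}$ rather than $\mbf{A}^{*}\mbf{A}$ correctly matches the stated order $\mbf{A}=\mbf{P}\mbf{U}$.
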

\begin{proof}
Recall that $\mbf{P} = \left( \mbf{A}^*\mbf{A} \right)^{-\frac{1}{2}}$. Then, the Banach square root
theorem~\cite{Gar66} implies that $\mbf{P} \in \mathscr{A}$. So, since $\mathscr{A}$ is inverse-closed, we have
$\mbf{P}^{-1} \in \mathscr{A}$, which in turn implies $\mbf{U} \in \mathscr{A}$. 
\end{proof}

A close relative of the polar factorization is the singular value decomposition 
$\mbf{A} = \mbf{V}\mbf{\Sigma}\mbf{W}^*$.
Since $\mbf{VW}^* = \mbf{U}$, where $\mbf{U}$ is the partial isometry from the polar decomposition of $\mbf{A}$, 
we can say that $\mbf{VW}^*$ inherits the decay of $\mbf{A}$. Of course, this says nothing about the
localization of $\bV$ and $\bW$. Indeed, as already mentioned in the introduction, it is clear that without 
any additional assumptions the individual 
factors $\mbf{V}$ and $\mbf{W}$ cannot inherit the decay properties of $\mbf{A}$. For example, note that the 
identity matrix $\mbf{I}$ can be written $\mbf{I} = \mbf{VI}\mbf{V}^*$ for any arbitrary unitary matrix $\mbf{V}$. 
Eigenvector localization is a heavily studied subject, in particular in connection with the famous phenomenon of  
Anderson localization (see e.g.~\cite{Sto11} and its many references). We will report on eigenvector localization
in connection with Banach algebras with off-diagonal decay in a forthcoming paper.


\section{Factorizations, Localization, and Functional Calculi} 
\label{s:functional}

The question whether matrix functions, such as the matrix exponential, inherit the localization properties
of a matrix has been investigated for the cases of band matrices in~\cite{BG99,Iserles00,BR07}. 
This issue is of relevance in various areas of numerical mathematics~\cite{BG99,Iserles00}. What
can we say about functions of a matrix if the matrix is not banded, but has some form of off-diagonal decay?

In the previous sections we were able to prove that localization of various matrix factors is preserved in general
under reasonably mild conditions and regardless of how these factors were obtained. In practice, the factors are often obtained via some kind of functional calculus. Since many useful functional calculi restrict naturally to the inverse closed subalgebras, the factors will automatically inherit the property.

For example, in case of Laurent matrices  a necessary and sufficient condition for factorization of a matrix  $\mbf{A} = (a_{jk}) \in \mathscr{C}_1$ is given in terms of its symbol 
$\sigma_{\mbf{A}} \in C(\mathbb{T})$, $\sigma_{\mbf{A}} (\theta)= \sum\limits_{n\in\mathbb Z} c_n \theta^{n}$,
$\theta\in\mathbb T$, $c_n = a_{j,j-n}$, $j,k,n\in\mathbb Z$.
Recall that in this case $\mbf{A}$ is the matrix of an operator of multiplication by $\sigma_{\mbf{A}}$ in
$L^2(\mathbb T)$ in the standard Fourier basis. Thus, the LU-factorization of $\mbf A$ is
equivalent to the \emph{spectral} factorization $\sigma_{\mbf A} = \sigma_{\mbf L}\sigma_{\mbf U}$,
where $\sigma_{\mbf L}$ admits a non-vanishing continuous extension that is holomorphic in 
$\mathbb D$ and $\sigma_{\mbf U}$ admits a non-vanishing continuous extension that is holomorphic in $\mathbb C\backslash\overline{\mathbb D}$.
The factorization condition then is the well-known Paley-Wiener condition \cite{JP01} 
\[\int_{-\pi}^{\pi} \ln \sigma_{\mbf{A}}(e^{it})dt > -\infty,\]
and the (extension of the) symbol of one of the factors, $\sigma_{\mbf{L}}$, is then computed via
\[\sigma_{\mbf{L}}(z) = \exp\left(\frac1{4\pi}\int_{-\pi}^{\pi} \ln \sigma_{\mbf{A}}(e^{it})\frac{e^{it}+z}{e^{it}-z}dt\right),\quad z\in\mathbb D. 
\]
Since the series defining the matrix exponential will converge in any Banach algebra of matrices, the factors will belong to a subalgebra $\mathscr A$ as long as the logarithm of the symbol of the matrix stays in the isomorphic subalgebra of $C(\mathbb T)$. We cite \cite{JP01,BP07} 
for more information and detailed estimates on the coefficients of the spectral factors.

Riesz-Dunford functional calculus is another natural example of a calculus that restricts to a Banach subalgebra. Indeed, if $\mbf A\in \mathscr A$ and $\mathscr A$ is inverse closed then 
\[\mbf B = \frac1{2\pi i}\int_\Gamma f(\lambda)(\lambda\mbf I -\mbf A)^{-1}d\lambda \in\mathscr A,\] 
where $f$ is holomorphic inside a positively oriented contour $\Gamma$ surrounding the spectrum
or a spectral component of $\mbf A$. To give a concrete example, it  now follows easily that 
the matrix exponential $\exp(\mbf{A}) \in \mathscr{A}$ if $\bA \in \mathscr{A}$, whenever
$\mathscr{A}$ is an inverse-closed Banach algebra, such as e.g.\ one of the Banach algebras
in Definition~\ref{algebras}.


Yet another example of a useful functional calculus is in some sense a generalization of the approach in the Laurent case. Given $\mbf A\in \mathscr B_c$ and $h\in L^1(\mathbb R)$, we can define a Banach
$L^1(\mathbb R)$-module structure \cite{BK05} via
\[h\mbf A = \int_{\mathbb T} h(\theta)f_{\mbf A}(\theta^{-1})d\theta = \int_{\mathbb T} h(\theta)M(\theta^{-1}){\mbf A}M(\theta)d\theta.\]
This structure extends to a closed operator calculus as in \cite{BK11}.
Again,  we have $h\mbf A \in \mathscr A$ as long as $\mbf A$ is in a Banach algebra $\mathscr A$ that is invariant under the modulation representation.


\subsection*{Acknowledgements}
Thomas Strohmer  acknowledges partial support from the NSF via grant
DTRA-DMS 1042939, and from DARPA via grant N66001-11-1-4090.
Tim Wertz was partially supported by  NSF via grant DTRA-DMS 1042939.

\def\cprime{$'$}

\end{document}